\documentclass[12pt]{article}
\usepackage{amscd}
\usepackage{amsmath}
\usepackage{latexsym}
\usepackage{amsfonts}
\usepackage{amssymb}
\usepackage{amsthm}
\usepackage{graphicx}
\usepackage{verbatim}
\usepackage{enumerate}
\usepackage{xcolor}

 \oddsidemargin .5cm \evensidemargin .5cm \marginparwidth 40pt
 \marginparsep 10pt \topmargin 0.32cm
 \headsep1pt
 \headheight 0pt
 \textheight 9.1in
 \textwidth 6in
 \sloppy

 \setlength{\parskip}{8pt}

\theoremstyle{plain}
\theoremstyle{definition}\newtheorem{theorem}{Theorem}[section]
\theoremstyle{plain}\newtheorem{lemma}[theorem]{Lemma}
\theoremstyle{plain}
\theoremstyle{plain}
\theoremstyle{remark}\newtheorem{remark}{Remark}[section]

\newcommand{\be}{\begin{equation}}
\newcommand{\ee}{\end{equation}}
 \newcommand{\ba}{\begin{aligned}}
 \newcommand{\ea}{\end{aligned}}

  \newcommand{\ben}{\begin{enumerate}}
   \newcommand{\een}{\end{enumerate}}

\newcommand{\Rmnum}[1]{\expandafter\@slowromancap\romannumeral #1@}

\begin{document}

\begin{titlepage}
\title{ Global Regularity of 2D almost resistive MHD Equations }
\author{ Baoquan Yuan and  Jiefeng Zhao\thanks{ Corresponding author.}
\\ School of Mathematics and Information Science,
       \\ Henan Polytechnic University,  Henan,  454000,  China.\\
   (Email: bqyuan@hpu.edu.cn,\,zhaojiefeng001@163.com)}
\end{titlepage}
\date{}
\maketitle


\begin{abstract}
\noindent

    Whether or not the solution to 2D resistive MHD equations is globally smooth remains open.  This paper establishes  the global regularity of solutions to the  2D almost resistive MHD equations, which require the dissipative operators $\mathcal{L}$  weaker than any power of the fractional Laplacian. The result is an improvement of the one of Fan et al. (Global Cauchy problem of 2D generalized MHD equations, Monatsh. Math., 175 (2014), pp. 127-131) which ask for $\alpha>0, \beta=1$.
\end{abstract}

\vspace{.2in} {\bf Key words:}\quad Almost resistive MHD equations, global regularity.

\section{Introduction}

Consider the  Cauchy problem of the two-dimensional
generalized magnetohydrodynamic equations:
\begin{eqnarray}
\left\{
 \begin{array}{llll}\label{eq}
  u_t + u \cdot \nabla u  =  - \nabla p + b \cdot \nabla b - \nu \Lambda^{
  2\alpha} u,  \\
  b_t + u \cdot \nabla b = b \cdot \nabla u - \kappa \Lambda^{2\beta} b,\\
  \nabla \cdot u = \nabla \cdot b  =  0, \\
  u\left(x,0\right)=u_0\left(x\right),\,\,\, b\left(x,0\right)=b_0\left(x\right)
  \end{array}\right.
\end{eqnarray}
for $x\in \mathbb{R}^2$ and $t>0$, where $ u=u\left(x,t\right) $ is
the velocity, $ b =b\left(x,t\right) $  the magnetic,
$ p =p\left(x,t\right) $  the pressure, and $
u_0\left(x\right),\,b_0\left(x\right) $ with $\mathrm{div}
u_0\left(x\right)=\mathrm{div} b_0\left(x\right)=0$ are the initial
velocity and magnetic, respectively. Here $\nu, \kappa,
\alpha, \beta \ge 0$ are nonnegative constants and  $\Lambda=\sqrt{-\Delta}$.

The global regularity of the d-D GMHD  (\ref{eq}) has
attracted a lot of attention and progress has been made in the last few years
(see [1-8, 12-20, 22, 23, 25, 26, 28-36]).
 In 2D case, it follows from \cite{CWY2014,JiZ2015,FMMNZ2014} that the
problem (\ref{eq}) has a unique global regular solution if $\alpha = 0, \  \beta > 1$ or
   $\alpha>0,\ \beta=1$. In 2D or 3D case, there have been various results on partial regularity, Serrin type
regularity  criterions for weak solutions, or  blow-up criterions for smooth solution to the usual MHD equations, for example \cite{CKS1997,CaW2010,ChMZ2008,ChMZ-2010,HeX2005-1,HeX2005-2,LeZ2009}. Recently, some important progresses have been made
on the global well-posedness for non-resistive MHD equations ( $\kappa=0,\,\alpha=1$) near an equilibrium(see  \cite{HuL2014,LiXZ2015,LiZ2014,ReWXZ2014,XuZ2015,Zh2014}). Local existence
for 2D non-resistive MHD equations in rough spaces have been obtained in \cite{JiN2006,FMRR2014,CMRR2016,FMRR2016}. Some results on global
regularity of 2D MHD equations with partial viscosity and resistivity  refer to \cite{CaRW2013,CaW2011}. To the best of our knowledge, whether or not there exists an global
regular solution for 2D resistive MHD ($\nu=0,\,\beta=1$) is still an open problem.

In this paper, motivated by \cite{CoV2012}, we are concerned with the following 2D GMHD
\begin{eqnarray}
\left\{
 \begin{array}{llll}\label{eq2}
  u_t + u \cdot \nabla u +\mathcal{L}u =  - \nabla p + b \cdot \nabla b,  \\
  b_t + u \cdot \nabla b -\Delta b= b \cdot \nabla u ,\\
  \nabla \cdot u = \nabla \cdot b  =  0, \\
  u\left(x,0\right)=u_0\left(x\right),\,\,\,
  b\left(x,0\right)=b_0\left(x\right).
  \end{array}\right.
\end{eqnarray}
where $\mathcal{L}$ is the dissipative operator with
\begin{equation}\label{c1}
\mathcal{L}u(x)=P.V.\int_{\mathbb{R}^2} \frac{u(x)-u(x-y)}{|y|^2m(|y|)} \mathrm dy.
\end{equation}
Here $m:[0,\infty)\rightarrow [0,\infty)$ is a smooth, non-decreasing function that behaves like $\frac{1}{(-\log{r})^{1+\varepsilon_1}}$ for
   sufficiently small $r$ with  $\varepsilon_1>0$ and that grows fast at least at the rate of $(\log{r})^{1+\varepsilon_2}$ for
   sufficiently large $r$ with  $\varepsilon_2>0$, satisfying
\begin{equation}\label{c2}
\int_0^1\frac{m(r)}{r} \mathrm dr<\infty
\end{equation}
and
the doubling condition
\begin{equation}\label{c3}
m(2r)<cm(r)
\end{equation}
for some positive constants c.

The main result  of this paper is stated as follows.
\begin{theorem}\label{thm1}
   Let m(r) satisfy \eqref{c1}-\eqref{c3} and $\rho \geqslant 4$.  Assume that $u_0, b_0 \in H^\rho(\mathbb R^2)$ with $\mathrm{div}
u_0=\mathrm{div} b_0=0$. Then for any $T>0$, the Cauchy problem \eqref{eq2} has a unique   regular solution
\begin{equation*}
(u,b)\in C([0,T];H^\rho(\mathbb R^2)) \,\,\mbox{and}\,\, b\in L^2([0,T];H^{\rho+2}(\mathbb R^2)).
\end{equation*}
\end{theorem}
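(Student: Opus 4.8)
The plan is to establish global regularity through the standard scheme of \emph{local existence} together with \emph{global a priori bounds}, the latter being the real content of the theorem.

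First I would construct a local-in-time solution in $H^\rho$ by a parabolic regularization (Friedrichs mollification of the nonlinearities together with a vanishing artificial viscosity $\varepsilon\Lambda^2u$) and uniform $H^\rho$ energy estimates, passing to the limit via Aubin--Lions; uniqueness follows from an $L^2$ estimate on the difference of two solutions, using that $\mathcal L$ and $-\Delta$ are nonnegative. I would then record a continuation criterion of Beale--Kato--Majda type, reducing the theorem to the single a priori bound $\int_0^T\big(\|\nabla u\|_{L^\infty}+\|\nabla b\|_{L^\infty}\big)\,dt<\infty$ on every finite interval. Securing this bound is the heart of the matter.

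Second, the low-order estimates. Pairing the first equation of \eqref{eq2} with $u$ and the second with $b$, the coupling terms $\int b\cdot\nabla b\cdot u$ and $\int b\cdot\nabla u\cdot b$ cancel after integration by parts (using $\nabla\cdot b=0$), while $(\mathcal Lu,u)=\tfrac12\iint\frac{|u(x)-u(x-y)|^2}{|y|^2m(|y|)}\,dy\,dx\ge0$; this gives the energy law and in particular $\int_0^T\|\nabla b\|_{L^2}^2\,dt<\infty$. Passing to the vorticity $\omega=\nabla\times u$ and current $j=\nabla\times b$, the transport--coupling terms cancel again, leaving, schematically,
\[
\tfrac12\tfrac{d}{dt}\big(\|\omega\|_{L^2}^2+\|j\|_{L^2}^2\big)+(\mathcal L\omega,\omega)+\|\nabla j\|_{L^2}^2=\int(\nabla b)(\nabla u)\,j\,dx.
\]
The crucial structural point to flag is that the dissipation $(\mathcal L\omega,\omega)$ has Fourier symbol growing only like $(\log|\xi|)^{1+\varepsilon_1}$, weaker than $|\xi|^\delta$ for every $\delta>0$; it therefore \emph{cannot} absorb the velocity-gradient factor by any interpolation, so the resistive term $\|\nabla j\|_{L^2}^2$ must carry the magnetic factors while the velocity gradient is controlled by a separate mechanism.

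Third --- the main obstacle --- I would control the velocity regularity through the nonlinear maximum principle of Constantin--Vicol \cite{CoV2012} adapted to $\mathcal L$. Evaluating the vorticity equation at a point $\bar x$ where $|\omega|$ is maximal annihilates the transport term and produces a pointwise nonlinear lower bound $\mathcal L\omega(\bar x)\gtrsim |\omega(\bar x)|/m(r)$ at a scale $r$ dictated by $|\omega(\bar x)|$ and the lower norms of $\omega$; combined with a maximum-principle bound on $\|b\|_{L^\infty}$ and parabolic smoothing for $j$ coming from the full resistive dissipation, this yields a differential inequality for $\|\omega\|_{L^\infty}$ (or, more robustly, for $\|\omega\|_{L^p}$ uniformly as $p\to\infty$). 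I expect the doubling condition \eqref{c3} and especially the integrability condition \eqref{c2}, $\int_0^1 m(r)/r\,dr<\infty$, to be exactly what renders the resulting inequality of Osgood/Gronwall type, so that it closes on every $[0,T]$ and delivers $\int_0^T\|\nabla u\|_{L^\infty}\,dt<\infty$. This is where the hypotheses on $m$ are essential and where I anticipate the main difficulty.

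Finally, with $\int_0^T(\|\nabla u\|_{L^\infty}+\|\nabla b\|_{L^\infty})\,dt<\infty$ in hand, I would bootstrap to $H^\rho$: applying $\Lambda^\rho$ to \eqref{eq2}, using Kato--Ponce commutator estimates and the top-order cancellation of the magnetic coupling, and absorbing the worst terms into the resistive dissipation $\|b\|_{H^{\rho+2}}$, a Gronwall argument closes the $H^\rho$ bound and simultaneously yields $b\in L^2([0,T];H^{\rho+2})$. The hypothesis $\rho\ge4$ enters here to guarantee, via $H^2(\mathbb{R}^2)\hookrightarrow L^\infty$, the $L^\infty$ control of the low-order derivatives appearing throughout these product estimates.
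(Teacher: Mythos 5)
Your skeleton (local existence, $L^2$ and $H^1$ energy laws, a Constantin--Vicol nonlinear maximum principle for $\omega$, then a BKM continuation criterion) matches the paper's architecture, and your observation that the symbol of $\mathcal L$ grows only logarithmically, so the dissipation cannot absorb anything by interpolation, is exactly right. But the proposal has a genuine gap at the step you yourself call the heart of the matter. The vorticity equation is $\omega_t+u\cdot\nabla\omega+\mathcal L\omega=b\cdot\nabla j$, and to run the pointwise maximum-principle argument you must say what controls the forcing $b\cdot\nabla j$, which contains two derivatives of $b$. You propose ``parabolic smoothing for $j$ coming from the full resistive dissipation,'' but this cannot deliver $\|\nabla j\|_{L^\infty_x}$: the source of the $j$-equation \eqref{eq:j-L2} contains $b\cdot\nabla\omega$, so any estimate on $\nabla j$ beyond $L^q_tL^p_x$ with $p,q<\infty$ requires control of $\nabla\omega$, which is precisely what you do not yet have. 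This circular dependence between $\omega$ and $\nabla j$ is the actual obstruction, and your proposal does not break it.

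The paper's key idea, absent from your proposal, is the good unknown $G=\Delta b+b\cdot\nabla u$ (Lemma \ref{lem7}). Using $\partial_1 j=\Delta b_2$ and $\partial_2 j=-\Delta b_1$, one writes $b\cdot\nabla j=b_1G_2-b_2G_1-b_1\,b\cdot\nabla u_2+b_2\,b\cdot\nabla u_1$. The quantity $G$ satisfies a heat equation \eqref{equb} whose right-hand side involves only quantities already controlled by the lower-order lemmas (including $\|b_i\mathcal Lu\|_{L^2}$, handled by splitting the singular integral at $|y|=1$), so maximal $L^p(L^q)$ regularity gives $G\in L^\infty_tL^\infty_x$; hence the forcing splits into a bounded part $f$ plus terms of the form $b_ib\cdot\nabla u_k$, in which $\nabla u$ is a singular integral of $\omega$ itself. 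Only for these latter terms does the Constantin--Vicol machinery apply: the near-field part of the Biot--Savart integral is bounded by $\|b\|_{L^\infty}^2|\omega(x,t)|\sqrt{D(x,t)}\left(\int_0^1 m(r)r^{-1}\,\mathrm dr\right)^{1/2}$ --- this, not an Osgood argument, is where hypothesis \eqref{c2} enters --- and is absorbed into $D(x,t)/8$, after which the lower bound $D\gtrsim |\omega|^2\log(1/\delta)/m(1)-\dots$ produces a damping term that beats the remaining $\|b\|_{L^\infty}^4|\omega|^2$. Without identifying $G$ (or an equivalent decoupling of $b\cdot\nabla j$), the differential inequality for $\|\omega\|_{L^\infty}$ you aim for does not close.
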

The existence and uniqueness are standard we omit their proofs, and only give the a priori estimates.
\begin{remark}
Due to
\begin{equation}
\Lambda^{2\alpha}u(x)=c_\alpha P.V.\int_{\mathbb{R}^2} \frac{u(x)-u(x-y)}{|y|^{2+2\alpha}} \mathrm dy
\end{equation}
for $\alpha\in(0,1)$ (see \cite{CoC2004}), the dissipative operator $\mathcal{L}$ defined in Theorem \ref{thm1} is weaker than any power of the
fractional Laplacian. Thus we improve the results in \cite{FMMNZ2014} for equations \eqref{eq} which require $\alpha>0, \beta=1$.
\end{remark}
\begin{remark}
Inspired by the work \cite{CoV2012,KN2010}, \eqref{c2} can be replaced by weaker conditions $\lim_{r\rightarrow 0^+}m(r)=0$, then we can obtain the global regularity of solutions to \eqref{eq2} with arbitrary weak dissipation $\mathcal{L}$ (see Remark \ref{rmk3}).
\end{remark}
\begin{remark}
In virtue of Remark \ref{mr1} and Section 3, we require only $u_0, b_0 \in H^\rho(\mathbb R^2)$ with $\rho > 3$.
\end{remark}
\begin{remark}
For the 2D GMHD  \eqref{eq2}, it remains an open problem whether there exists a global smooth solution without the dissipative operator $\mathcal{L}$.
\end{remark}

\section{Preliminaries}\label{sec:the preparations}
Let us first consider the heat equation
\begin{eqnarray*}
 \left\{
 \begin{array}{llll}
   v_t -\Delta v =  f,
  \\
   v\left(x,0\right)=v_0(x).
 \end{array}\right.
\end{eqnarray*}
As we all know
\begin{eqnarray}
v\left(x,t\right)&=& e^{t\Delta}v_0+\int^t_0 e^{(t-s)\Delta}f(\cdot,s)\mathrm{d}s\nonumber\\
&=&h(\cdot,t)*v_0+\int^t_0 h(\cdot,t-s)*f(\cdot,s)\mathrm{d}s,
\end{eqnarray}
where $h(x,t)=\frac{1}{(4\pi t)^{\frac{d}{2}}}e^{\frac{-|x|^2}{4t}}$.

Recalled the following maximal $L^p(L^q)$ regularity theorem for the heat kernel.

\begin{lemma}{\rm (\cite{Le2002})}\label{lem1}
Assume $f\in L^p((0,T),L^q(\mathbb{R}^d))(1<p,q<\infty)$.
Let
\begin{equation*}
 A:v\mapsto A  f(x,t)=\int^t_0  e^{(t-s)\Delta}\Delta f(\cdot,s)\mathrm{d}s,
 \end{equation*}
 then
 \begin{equation*}
 \left\|Af\right\|_{L^p((0,T),L^q(\mathbb{R}^d))}\leqslant C \left\|f\right\|_{L^p((0,T),L^q(\mathbb{R}^d))}
 \end{equation*}
for every $T\in(0,\infty]$£©and some positive constants $C$ (independent of $T$).
\end{lemma}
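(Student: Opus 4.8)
The statement is the classical maximal $L^p$--$L^q$ regularity for the heat semigroup, so the plan is to exhibit $A$ as a Calder\'on--Zygmund singular integral and bootstrap from an $L^2$ base case. First I would identify the space--time Fourier symbol of $A$. Extending $f$ by zero for $t<0$ and using causality, $Af$ becomes a convolution in $t$, and Fourier transforming in both the space variable (dual variable $\xi$) and the time variable (dual variable $\tau$) converts $A$ into multiplication by $M(\xi,\tau)=-|\xi|^2/(i\tau+|\xi|^2)$. Because $|M(\xi,\tau)|=|\xi|^2/\sqrt{\tau^2+|\xi|^4}\le 1$, Plancherel's theorem shows at once that $A$ is bounded on $L^2(\mathbb R\times\mathbb R^d)$, that is, on $L^2((0,T),L^2(\mathbb R^d))$, with a constant independent of $T$ (the zero extension makes the bound for $T=\infty$ govern every finite $T$). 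This is the diagonal base case $p=q=2$.

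Second, I would pass to all equal exponents $p=q$ by the anisotropic (parabolic) multiplier theorem. The symbol $M$ is smooth away from the origin and homogeneous of degree zero under the parabolic dilations $(\xi,\tau)\mapsto(\lambda\xi,\lambda^2\tau)$, so it satisfies the Mikhlin--H\"ormander estimates $|\partial_\xi^\alpha\partial_\tau^j M(\xi,\tau)|\lesssim(|\xi|+|\tau|^{1/2})^{-|\alpha|-2j}$ adapted to these dilations. The nonisotropic Calder\'on--Zygmund theory then yields boundedness of $A$ on $L^p(\mathbb R^{d+1})$ for every $1<p<\infty$; in particular $A$ is bounded on $L^q((0,T),L^q(\mathbb R^d))$ for each fixed $q\in(1,\infty)$, again uniformly in $T$.

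Third, I would obtain the genuinely mixed norm $p\ne q$ by treating $A$ as a convolution in time with an operator-valued kernel. Writing $K(t)=\Delta e^{t\Delta}\mathbf 1_{\{t>0\}}$, viewed as an element of $\mathcal L(L^q(\mathbb R^d))$, analyticity of the heat semigroup and its Gaussian bounds give $\|K(t)\|_{\mathcal L(L^q)}\lesssim t^{-1}$ and $\|K'(t)\|_{\mathcal L(L^q)}\lesssim t^{-2}$ for $t>0$, which are exactly the size and smoothness estimates of a one-dimensional Calder\'on--Zygmund kernel valued in the Banach space $\mathcal L(L^q(\mathbb R^d))$; in particular the H\"ormander integral condition holds. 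Using the base bound on $L^q((0,T),L^q)$ from the previous step, the vector-valued Calder\'on--Zygmund theorem of Benedek--Calder\'on--Panzone upgrades it to $L^p((0,T),L^q(\mathbb R^d))$ for every $1<p<\infty$, with constant independent of $T$, which is the assertion. An alternative single-shot route is Weis' theorem: on the UMD space $L^q$ the imaginary-axis resolvents $\{\,i\tau(i\tau-\Delta)^{-1}\,\}$ are $R$-bounded, and $R$-boundedness of these resolvents is equivalent to maximal $L^p$ regularity.

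I expect the main obstacle to be the third step: producing the kernel estimates in the operator norm $\|\cdot\|_{\mathcal L(L^q)}$ rather than merely on $L^2$, and running the vector-valued machinery at all, since the Plancherel argument is unavailable once $q\ne 2$ and one must pass through the Banach-space-valued Calder\'on--Zygmund theory (or the $R$-boundedness of the resolvent family). The delicate bookkeeping is reconciling the parabolic, anisotropic scaling used in Step 2 with the isotropic-in-time vector-valued theory of Step 3, and keeping every constant independent of $T$.
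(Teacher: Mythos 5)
The paper offers no proof of this lemma to compare against: it is quoted verbatim as a known result from Lemari\'e-Rieusset's book \cite{Le2002}, and the authors use it as a black box. Your outline is a correct reconstruction of the standard proof of maximal $L^p(L^q)$ regularity and is essentially the argument underlying the cited reference, so there is no conflict with the paper, only more detail than it supplies. The three steps do fit together: causality plus zero-extension reduces every finite $T$ to $T=\infty$ with a uniform constant; the space--time symbol $-|\xi|^2/(i\tau+|\xi|^2)$ has modulus at most $1$, giving the $L^2(\mathbb{R}^{1+d})$ case by Plancherel; its homogeneity of degree zero under $(\xi,\tau)\mapsto(\lambda\xi,\lambda^2\tau)$ yields the anisotropic Mikhlin--H\"ormander estimates and hence the diagonal case $p=q$; and the operator-valued kernel bounds $\left\|\Delta e^{t\Delta}\right\|_{\mathcal{L}(L^q)}\lesssim t^{-1}$, $\left\|\Delta^2 e^{t\Delta}\right\|_{\mathcal{L}(L^q)}\lesssim t^{-2}$ give the one-dimensional H\"ormander condition needed for the Benedek--Calder\'on--Panzone theorem. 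You were right to take the base exponent for the vector-valued step to be $p_0=q$ (so that $L^q((0,T);L^q)=L^q$ in space--time by Fubini), since the $L^2$ bound of Step 1 cannot serve as a base case for $q\neq 2$, $L^q$ not being a Hilbert space; the only detail worth adding is that reaching $p>q$ through duality in the Benedek--Calder\'on--Panzone scheme requires the adjoint kernel to satisfy the same H\"ormander condition, which holds because $\left\|K(t)^*\right\|_{\mathcal{L}(L^{q'})}=\left\|K(t)\right\|_{\mathcal{L}(L^q)}$ obeys identical bounds. Your alternative route via $R$-boundedness and Weis' theorem is also valid, though it invokes heavier machinery than the problem requires.
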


\begin{lemma}\label{lem2}
  Let $u_0, b_0 \in L^2(\mathbb R^2) $, for any $T>0$ and $0<t<T$, we have
  \begin{eqnarray*}
&& \left\| u \right\|_{L^2}^2 \left( t \right) + \left\| b
    \right\|_{L^2}^2 \left( t \right)\\ &&  +\frac{1}{2}\int_0^t \int_{\mathbb{R}^2}\int_{\mathbb{R}^2} \frac{\left|u(x,\tau)-u(x-y,\tau)\right|^2}{|y|^2m(|y|)} \mathrm dx\mathrm dy\mathrm{d} \tau +\int_0^t \left\| \nabla b
    \right\|_{L^2}^2  \mathrm{d} \tau \leqslant \left\| u_0 \right\|_{L^2}^2  + \left\| b_0
    \right\|_{L^2}^2  .
  \end{eqnarray*}
\end{lemma}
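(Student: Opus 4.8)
The plan is to derive the standard $L^2$ energy balance by testing the momentum equation in \eqref{eq2} against $u$ and the induction equation against $b$, and then integrating in time. Since existence and uniqueness are assumed, I work formally with a sufficiently smooth, decaying solution so that every integration by parts is justified. Testing the first equation of \eqref{eq2} with $u$, the convective term $\int_{\mathbb{R}^2}(u\cdot\nabla u)\cdot u\,dx$ and the pressure term $\int_{\mathbb{R}^2}\nabla p\cdot u\,dx$ vanish upon integrating by parts, using $\nabla\cdot u=0$. Testing the second equation with $b$, the term $\int_{\mathbb{R}^2}(u\cdot\nabla b)\cdot b\,dx$ vanishes for the same reason, while $-\int_{\mathbb{R}^2}(\Delta b)\cdot b\,dx=\|\nabla b\|_{L^2}^2$.

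The crucial cancellation is between the two magnetic coupling terms. Integrating by parts and invoking $\nabla\cdot b=0$ gives
\begin{equation*}
\int_{\mathbb{R}^2}(b\cdot\nabla b)\cdot u\,dx=-\int_{\mathbb{R}^2}(b\cdot\nabla u)\cdot b\,dx,
\end{equation*}
so the Lorentz force term from the first equation exactly cancels the stretching term from the second when the two tested identities are added. Hence only the two dissipative contributions remain on the left-hand side.

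The single nonstandard ingredient is the coercivity of $\mathcal{L}$. With the even kernel $K(y)=|y|^{-2}m(|y|)^{-1}$, I would symmetrize using $2a\cdot(a-c)=|a-c|^2+|a|^2-|c|^2$ with $a=u(x)$, $c=u(x-y)$; the translation invariance of Lebesgue measure makes the $|u(x)|^2-|u(x-y)|^2$ part integrate to zero in $x$, leaving
\begin{equation*}
\int_{\mathbb{R}^2} u\cdot\mathcal{L}u\,dx=\frac12\int_{\mathbb{R}^2}\int_{\mathbb{R}^2}\frac{|u(x)-u(x-y)|^2}{|y|^2m(|y|)}\,dy\,dx\ge 0.
\end{equation*}
The structural hypotheses \eqref{c2}--\eqref{c3} on $m$ are exactly what make $\mathcal{L}$ a well-defined, nonnegative dissipative operator and render this quadratic form finite, so the principal-value manipulation is legitimate.

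Combining these computations yields the energy identity
\begin{equation*}
\frac12\frac{d}{dt}\big(\|u\|_{L^2}^2+\|b\|_{L^2}^2\big)+\frac12\int_{\mathbb{R}^2}\int_{\mathbb{R}^2}\frac{|u(x)-u(x-y)|^2}{|y|^2m(|y|)}\,dy\,dx+\|\nabla b\|_{L^2}^2=0,
\end{equation*}
and integrating from $0$ to $t$, multiplying by two, and keeping only a portion of the nonnegative dissipative terms, produces the stated inequality. The algebra here is routine; the only real point requiring care is the symmetrization and finiteness of the nonlocal form, which is precisely where the conditions \eqref{c2}--\eqref{c3} on $m$ enter. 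I do not expect any genuine obstacle in this lemma, as it is the baseline energy estimate on which the higher-order estimates will be built.
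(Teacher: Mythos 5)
Your argument is correct and is exactly the route the paper takes: the standard $L^2$ energy estimate, with the coercivity identity $\int_{\mathbb{R}^2} u\,\mathcal{L}u\,\mathrm{d}x=\tfrac12\int\!\!\int \frac{|u(x)-u(x-y)|^2}{|y|^2 m(|y|)}\,\mathrm{d}x\,\mathrm{d}y$ obtained by symmetrization as the only nonstandard ingredient. The paper simply states this identity and invokes the standard estimates, whereas you spell out the cancellations; there is no substantive difference.
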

Due to
\begin{equation*}
\int_{\mathbb{R}^2}u\mathcal{L}u\mathrm{d}x=\frac{1}{2}\int_{\mathbb{R}^2}\int_{\mathbb{R}^2} \frac{\left|u(x,t)-u(x-y,t)\right|^2}{|y|^2m(|y|)} \mathrm dx\mathrm dy,
\end{equation*}
we get the Lemma \ref{lem2} easily by the standard $L^2$-energy estimates.

Denote $\omega = \nabla^{\bot} \cdot u = - \partial_2 u_1 +
\partial_1 u_2$ the vorticity of the velocity fields and $j = \nabla^{\bot} \cdot b = - \partial_2 b_1 + \partial_1 b_2$
the current of the magnetic fields. Applying $\nabla^{\bot} \cdot$
on both sides of the equations \eqref{eq2}, we obtain the following
equations for    $\omega$ and $j$:
  \begin{eqnarray}
    \omega_t + u \cdot \nabla \omega + \mathcal{L}\omega & = & b \cdot \nabla j,  \label{eq:omega-L2}\\
    j_t + u \cdot \nabla j-\triangle j & = & b \cdot \nabla \omega + T \left( \nabla u,
    \nabla b \right) ,  \label{eq:j-L2}
  \end{eqnarray}
 where
  \begin{equation*}
    T \left( \nabla u, \nabla b \right) = {\color{black} 2 \partial_1 b_1
    \left( \partial_1 u_2 + \partial_2 u_1 \right) + 2 \partial_2 u_2  \left(
    \partial_1 b_2 + \partial_2 b_1 \right)} .
  \end{equation*}

\begin{lemma}\label{lem3} Let $u_0, b_0 \in H^1(\mathbb R^2)$. Then
for any $T>0$ and $0<t<T$, we have
 \begin{eqnarray}
    &&\left\| \omega \right\|_{L^2}^2 \left( t \right) + \left\| j
    \right\|_{L^2}^2 \left( t \right) \\ \nonumber &&+\int_0^t \int_{\mathbb{R}^2}\int_{\mathbb{R}^2} \frac{\left|\omega(x,\tau)-\omega(x-y,\tau)\right|^2}{|y|^2m(|y|)} \mathrm dx\mathrm dy\mathrm{d} \tau +\int_0^t  \left\| \nabla j
    \right\|_{L^2}^2  \mathrm d\tau \leqslant C \left( T
    \right).
 \end{eqnarray}
 \end{lemma}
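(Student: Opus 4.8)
The plan is to run coupled $L^2$ energy estimates on the vorticity--current system \eqref{eq:omega-L2}--\eqref{eq:j-L2}. I would multiply \eqref{eq:omega-L2} by $\omega$ and \eqref{eq:j-L2} by $j$, integrate over $\mathbb{R}^2$, and add the two identities. The transport terms $\int (u\cdot\nabla\omega)\,\omega\,\mathrm dx$ and $\int (u\cdot\nabla j)\, j\,\mathrm dx$ vanish because $\nabla\cdot u=0$, while the dissipative terms produce precisely the two nonnegative quantities appearing on the left of the claimed inequality, using the identity $\int\omega\mathcal{L}\omega\,\mathrm dx=\frac12\iint\frac{|\omega(x)-\omega(x-y)|^2}{|y|^2m(|y|)}\,\mathrm dx\mathrm dy$ recorded before Lemma \ref{lem2}, together with $-\int j\Delta j\,\mathrm dx=\|\nabla j\|_{L^2}^2$.

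The decisive simplification comes from the magnetic coupling terms. Integrating by parts and using $\nabla\cdot b=0$ gives $\int (b\cdot\nabla j)\,\omega\,\mathrm dx=-\int (b\cdot\nabla\omega)\,j\,\mathrm dx$, so the two cross terms cancel exactly. After this cancellation the only surviving source is $\int j\,T(\nabla u,\nabla b)\,\mathrm dx$, and since $T$ is bilinear in $\nabla u$ and $\nabla b$ we have the pointwise bound $|T(\nabla u,\nabla b)|\le C|\nabla u|\,|\nabla b|$. Thus I expect to reach
\begin{equation*}
\frac12\frac{\mathrm d}{\mathrm dt}\left(\|\omega\|_{L^2}^2+\|j\|_{L^2}^2\right)+\frac12\iint\frac{|\omega(x)-\omega(x-y)|^2}{|y|^2m(|y|)}\,\mathrm dx\mathrm dy+\|\nabla j\|_{L^2}^2\le C\int|j|\,|\nabla u|\,|\nabla b|\,\mathrm dx.
\end{equation*}

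The main obstacle is that the dissipation $\mathcal{L}\omega$ is far too weak to control any derivative of $\omega$, so $\|\nabla\omega\|_{L^2}$ is unavailable and all smoothing must be extracted from the heat operator acting on $j$. I would therefore keep $\nabla u$ (equivalently $\omega$) only in $L^2$ and load the higher integrability onto $j$ and $\nabla b$, both of which are controlled by $\|\nabla j\|_{L^2}$. Concretely, H\"older with exponents $(2,4,4)$, the 2D Ladyzhenskaya--Gagliardo--Nirenberg inequalities $\|j\|_{L^4}\le C\|j\|_{L^2}^{1/2}\|\nabla j\|_{L^2}^{1/2}$ and $\|\nabla b\|_{L^4}\le C\|\nabla b\|_{L^2}^{1/2}\|\nabla^2 b\|_{L^2}^{1/2}$, and the divergence-free identities $\|\nabla u\|_{L^2}=\|\omega\|_{L^2}$, $\|\nabla b\|_{L^2}=\|j\|_{L^2}$, $\|\nabla^2 b\|_{L^2}=\|\nabla j\|_{L^2}$, give
\begin{equation*}
C\int|j|\,|\nabla u|\,|\nabla b|\,\mathrm dx\le C\|\omega\|_{L^2}\|j\|_{L^2}\|\nabla j\|_{L^2}\le \frac12\|\nabla j\|_{L^2}^2+C\|\omega\|_{L^2}^2\|j\|_{L^2}^2,
\end{equation*}
the last step by Young's inequality, so that $\frac12\|\nabla j\|_{L^2}^2$ is absorbed into the dissipation on the left.

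Finally I would close the estimate by Gr\"onwall. Setting $X(t)=\|\omega\|_{L^2}^2(t)+\|j\|_{L^2}^2(t)$, the surviving inequality reads $\frac{\mathrm d}{\mathrm dt}X\le C\|j\|_{L^2}^2\,X$. The crucial input is that $\int_0^t\|j\|_{L^2}^2\,\mathrm d\tau=\int_0^t\|\nabla b\|_{L^2}^2\,\mathrm d\tau$ is already bounded by $\|u_0\|_{L^2}^2+\|b_0\|_{L^2}^2$ through Lemma \ref{lem2}; hence Gr\"onwall yields $X(t)\le X(0)\exp\left(C\int_0^t\|j\|_{L^2}^2\,\mathrm d\tau\right)\le C(T)$. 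Integrating the differential inequality in time then controls $\int_0^t\iint\frac{|\omega(x,\tau)-\omega(x-y,\tau)|^2}{|y|^2m(|y|)}\,\mathrm dx\mathrm dy\,\mathrm d\tau$ and $\int_0^t\|\nabla j\|_{L^2}^2\,\mathrm d\tau$, which completes the proof. Notably, this level of the argument uses only the nonnegativity of the $\mathcal{L}$-quadratic form, not the fine structure of $m$.
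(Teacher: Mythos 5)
Your proposal is correct and follows essentially the same route as the paper: the same coupled $L^2$ estimate on \eqref{eq:omega-L2}--\eqref{eq:j-L2}, the same cancellation of the $b\cdot\nabla$ cross terms, the same bound $C\|\omega\|_{L^2}\|j\|_{L^2}\|\nabla j\|_{L^2}$ on the $T(\nabla u,\nabla b)$ term via Ladyzhenskaya--Gagliardo--Nirenberg, absorption of $\frac12\|\nabla j\|_{L^2}^2$, and Gr\"onwall using $\int_0^t\|\nabla b\|_{L^2}^2\,\mathrm d\tau\le C$ from Lemma \ref{lem2}. The only cosmetic difference is that you pass through $\|\nabla b\|_{L^4}$ explicitly where the paper writes $\|j\|_{L^4}^2$ directly; both lead to the identical differential inequality.
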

\begin{proof}
Multiplying \eqref{eq:omega-L2} by $\omega$ and \eqref{eq:j-L2} by $j$ respectively, integrating and adding together, we have
\begin{eqnarray*}
   & & \frac{1}{2}  \frac{\mathrm{d}}{\mathrm{d} t} (\left\|\omega\right\|_{L^2}^2+ \left\|j\right\|_{L^2}^2)+\frac{1}{2}\int_{\mathbb{R}^2}\int_{\mathbb{R}^2} \frac{\left|\omega(x,t)-\omega(x-y,t)\right|^2}{|y|^2m(|y|)} \mathrm dx\mathrm dy +\left\|\nabla j\right\|_{L^2}^2\\
   &=& \int_{\mathbb{R}^2} b \cdot\nabla j \, \omega \mathrm{d} x
     +\int_{\mathbb{R}^2} b\cdot\nabla \omega \, j\mathrm{d} x+ \int_{\mathbb{R}^2} T \left( \nabla u, \nabla b \right)j\mathrm{d} x\\
    &=&\int_{\mathbb{R}^2} T \left( \nabla u, \nabla b \right)j\mathrm{d} x\\
    &\leqslant&C\left\|\nabla u\right\|_{L^2}\left\|j\right\|_{L^4}^2\\
   & \leqslant& C\left\|\omega\right\|_{L^2}^2\left\|j\right\|_{L^2}^2+\frac{1}{2}\left\|\nabla j\right\|_{L^2}^2,
\end{eqnarray*}
where the Gagliardo-Nirenberg inequality has been used in the last inequality.

Thus, we have
\begin{eqnarray*}
   & &\frac{\mathrm{d}}{\mathrm{d} t} (\left\|\omega\right\|_{L^2}^2+ \left\|j\right\|_{L^2}^2)+\int_{\mathbb{R}^2}\int_{\mathbb{R}^2} \frac{\left|\omega(x,t)-\omega(x-y,t)\right|^2}{|y|^2m(|y|)} \mathrm dx\mathrm dy  +\left\|\nabla j\right\|_{L^2}^2\\
& \leqslant& C\left\|\omega\right\|_{L^2}^2\left\|j\right\|_{L^2}^2.
\end{eqnarray*}
By taking advantage of Gronwall inequality and Lemma \ref{lem2}, we complete the proof of Lemma \ref{lem3}.
\end{proof}

\begin{lemma}\label{lem4}
Let $u_0, b_0 \in H^2(\mathbb R^2)$. Then
for any $T>0$ and $0<t<T$, we have
  \begin{equation}
  b\in L^{\infty}((0,T);L^{\infty}(\mathbb{R}^2)),\,\, \nabla b\in L^{p}((0,T);L^{q}(\mathbb{R}^2)).
  \end{equation}
for any $p,q\in(2,\infty)$.
\end{lemma}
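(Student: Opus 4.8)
The plan is to treat the magnetic equation $b_t-\Delta b = b\cdot\nabla u - u\cdot\nabla b$ as a forced heat equation and to exploit the divergence structure of the nonlinearity together with the maximal regularity of Lemma \ref{lem1}. Since $\nabla\cdot u=\nabla\cdot b=0$, each component of the forcing can be written as a divergence, $(b\cdot\nabla u-u\cdot\nabla b)_i=\partial_k\bigl(b_k u_i-u_k b_i\bigr)=:\partial_k G^i_k$, so that $b$ solves $b_t-\Delta b=\nabla\cdot G$ with $|G|\leqslant C|u|\,|b|$. The first input is an integrability bound on $G$: by Lemmas \ref{lem2} and \ref{lem3} we have $u,b\in L^\infty(0,T;L^2)$ and $\omega,j\in L^\infty(0,T;L^2)$, hence (using $\|\nabla u\|_{L^2}\sim\|\omega\|_{L^2}$, $\|\nabla b\|_{L^2}\sim\|j\|_{L^2}$ for divergence-free fields) $u,b\in L^\infty(0,T;H^1)$. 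The 2D Sobolev embedding $H^1\hookrightarrow L^r$ ($2\leqslant r<\infty$) then gives $u,b\in L^\infty(0,T;L^r)$ for every finite $r$, and by H\"older $G\in L^\infty(0,T;L^q)$ for every $q\in[1,\infty)$; in particular $\|G\|_{L^p(0,T;L^q)}\leqslant T^{1/p}\|G\|_{L^\infty(0,T;L^q)}<\infty$.

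For $\nabla b\in L^p(0,T;L^q)$ I would differentiate the Duhamel formula $b(t)=e^{t\Delta}b_0+\int_0^t e^{(t-s)\Delta}\nabla\cdot G(s)\,\mathrm ds$. Writing $\partial_l\partial_k=\Delta\,S_{lk}$ with $S_{lk}=\partial_l\partial_k\Delta^{-1}$ a Calder\'on--Zygmund (Riesz) operator bounded on $L^q$ for $1<q<\infty$, the nonlinear contribution to $\partial_l b_i$ becomes $\int_0^t e^{(t-s)\Delta}\Delta\bigl(S_{lk}G^i_k\bigr)\,\mathrm ds=A\bigl(S_{lk}G^i_k\bigr)$, which is exactly the operator of Lemma \ref{lem1}. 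Hence $\|\nabla b\|_{L^pL^q}\leqslant\|\nabla e^{t\Delta}b_0\|_{L^pL^q}+C\|S_{lk}G^i_k\|_{L^pL^q}\leqslant\|\nabla e^{t\Delta}b_0\|_{L^pL^q}+C\|G\|_{L^pL^q}$. The linear term is finite because $\nabla b_0\in H^1\hookrightarrow L^q$ and $e^{t\Delta}$ is an $L^q$-contraction, while the nonlinear term is finite by the bound on $G$ above. This yields $\nabla b\in L^p(0,T;L^q)$ for all $p,q\in(1,\infty)$, in particular for $p,q\in(2,\infty)$.

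For $b\in L^\infty(0,T;L^\infty)$ I would estimate the same Duhamel formula directly in $L^\infty$. Using the gradient heat-kernel bound $\|\nabla h_\tau\|_{L^{q'}(\mathbb{R}^2)}\leqslant C\tau^{-3/2+1/q'}$ (with $q'$ the conjugate exponent of $q$) and Young's inequality, one gets $\|e^{(t-s)\Delta}\nabla\cdot G(s)\|_{L^\infty}\leqslant C(t-s)^{-3/2+1/q'}\|G(s)\|_{L^q}$, whose time singularity is integrable precisely when $q>2$ (so that $q'<2$ and $-3/2+1/q'>-1$). Combined with $\|e^{t\Delta}b_0\|_{L^\infty}\leqslant\|b_0\|_{L^\infty}<\infty$ (from $b_0\in H^2\hookrightarrow L^\infty$) and $G\in L^\infty(0,T;L^q)$ for a fixed $q>2$, this produces a bound on $\|b(t)\|_{L^\infty}$ that is uniform for $t\in(0,T)$.

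The main obstacle is not quantitative but structural: one must write the nonlinearity in divergence form using \emph{both} incompressibility conditions, and then match the composed operator $\nabla e^{(t-s)\Delta}\nabla\cdot$ to the maximal-regularity operator $A$ through the identity $\partial_l\partial_k=\Delta\,S_{lk}$, so that no unpaired half-derivative survives and Lemma \ref{lem1} applies cleanly. Once this is arranged, the only genuine constraint is $q>2$, forced by the integrability of the gradient heat kernel in the $L^\infty$ estimate; all remaining bounds follow from Lemmas \ref{lem1}--\ref{lem3}, the $L^q$-boundedness of Riesz transforms, and the 2D embeddings $H^1\hookrightarrow L^r$ and $H^2\hookrightarrow L^\infty$.
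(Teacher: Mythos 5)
Your proof is correct and follows essentially the same route as the paper: rewrite $(\ref{eq2})_2$ in divergence form $b_t-\Delta b=\sum_i\partial_i(b_iu-u_ib)$, note that $b_iu-u_ib\in L^\infty((0,T);L^q)$ by Lemmas \ref{lem2}--\ref{lem3} and Sobolev embedding, and apply the maximal regularity of Lemma \ref{lem1} together with direct heat-kernel estimates. The paper states this in two lines; you have simply supplied the details (the Riesz-operator factorization $\partial_l\partial_k=\Delta S_{lk}$ and the $q>2$ integrability of $\|\nabla h_\tau\|_{L^{q'}}$) that the authors leave implicit.
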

$(\ref{eq2})_2$ can be written as
\begin{equation}\label{6.4mhd}
b_t-\Delta b=\sum_{i=1}^2\partial_i (  b_i u-u_i b).
\end{equation}
Due to $b_i u-u_i b\in L^{\infty}((0,T);L^{p}(\mathbb{R}^2))$ and Lemma \ref{lem1}, we obtain Lemma \ref{lem4}.

\begin{lemma}\label{lem5}
Let $u_0, b_0 \in H^2(\mathbb R^2)$. Then
for any $T>0$ and $0<t<T$, we have
  \begin{equation}
  \omega\in L^{\infty}((0,T);L^{p}(\mathbb{R}^2))
  \end{equation}
for any $p\in(2,\infty)$.
\end{lemma}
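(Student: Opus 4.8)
The plan is to close an $L^{p}$ estimate for the vorticity while importing all of the smoothing from the magnetic field, since $\mathcal L$ supplies none. Fix $p\in(2,\infty)$; as $u_0\in H^2$ we have $\omega_0=\nabla^{\bot}\cdot u_0\in H^1\hookrightarrow L^p$. Multiply the vorticity equation \eqref{eq:omega-L2} by $|\omega|^{p-2}\omega$ and integrate: the convection term vanishes because $\nabla\cdot u=0$, and the nonlocal term is nonnegative, $\int_{\mathbb R^2}|\omega|^{p-2}\omega\,\mathcal L\omega\,\mathrm dx\ge0$, by the pointwise convexity inequality valid for operators of the form \eqref{c1}. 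Dropping it and using $b\in L^{\infty}((0,T);L^{\infty})$ from Lemma \ref{lem4}, I obtain $\frac{\mathrm d}{\mathrm dt}\|\omega\|_{L^p}\le\|b\|_{L^\infty}\|\nabla j\|_{L^p}$, and after integrating and one Hölder step in time
\[
\|\omega(t)\|_{L^p}\le\|\omega_0\|_{L^p}+\|b\|_{L^\infty_{x,t}}\,t^{1/2}\,\|\nabla j\|_{L^2((0,t);L^p)}.
\]
Thus everything reduces to an $L^2_tL^p_x$ bound for $\nabla j$.

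For this I exploit the genuine heat dissipation in the current equation \eqref{eq:j-L2}. Put the magnetic stretching in divergence form, $b\cdot\nabla\omega=\nabla\cdot(b\omega)$, and regard $u\cdot\nabla j$ and $T(\nabla u,\nabla b)$ as a zeroth-order forcing $W$. Writing $\nabla j-\nabla e^{t\Delta}j_0=\int_0^t e^{(t-s)\Delta}\Delta\,\big(\nabla\Delta^{-1}\nabla\cdot(b\omega)+\nabla\Delta^{-1}W\big)\,\mathrm ds$ and noting that $\nabla\Delta^{-1}\nabla\cdot$ is a Riesz transform (bounded on $L^p$) while $\nabla\Delta^{-1}$ gains one derivative ($L^{p_*}\to L^p$ with $1/p_*=1/p+1/2$), Lemma \ref{lem1} yields $\|\nabla j\|_{L^2_tL^p_x}\lesssim\|\nabla e^{t\Delta}j_0\|_{L^2_tL^p_x}+\|b\omega\|_{L^2_tL^p_x}+\|W\|_{L^2_tL^{p_*}_x}$. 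The data term is finite because $j_0\in H^1$ gives $\int_0^T\|\nabla e^{t\Delta}j_0\|_{L^p}^2\,\mathrm dt\lesssim\int_0^T t^{-(1-2/p)}\,\mathrm dt\,\|\nabla j_0\|_{L^2}^2<\infty$ for $p>2$. Every term in $W$ is lower order: $\|u\cdot\nabla j\|_{L^2_tL^{p_*}_x}\le\|u\|_{L^\infty_tL^p_x}\|\nabla j\|_{L^2_tL^2_x}$ and $\|T(\nabla u,\nabla b)\|_{L^2_tL^{p_*}_x}\le C\|\nabla u\|_{L^\infty_tL^2_x}\|\nabla b\|_{L^2_tL^p_x}$ are finite by Lemmas \ref{lem3}--\ref{lem4} (here $u\in L^\infty_tL^p_x$ follows from $u\in L^\infty_tH^1$). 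Only the stretching feeds back the unknown, via $\|b\omega\|_{L^2_tL^p_x}\le\|b\|_{L^\infty}\|\omega\|_{L^2_tL^p_x}$, so
\[
\|\nabla j\|_{L^2((0,t);L^p)}\le C\|b\|_{L^\infty}\,\|\omega\|_{L^2((0,t);L^p)}+C_0(T).
\]

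Now I couple the two displays. Let $\Phi(t)=\|\omega\|_{L^2((0,t);L^p)}^2$. Squaring the first display, integrating over $(0,\tau)$, and substituting the second gives an integral inequality $\Phi(\tau)\le C_1(\tau)+C_2\int_0^\tau t\,\Phi(t)\,\mathrm dt$, with $C_1,C_2$ depending only on $T$, $\|b\|_{L^\infty}$ and the data. Gronwall's inequality then forces $\Phi(T)<\infty$, whence $\nabla j\in L^2((0,T);L^p)$; feeding this back into the first display bounds $\|\omega\|_{L^\infty((0,T);L^p)}$, which is the claim.

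The heart of the difficulty is precisely this coupling. Since $\mathcal L$ is weaker than every fractional Laplacian, the vorticity equation behaves like a forced transport equation, and the forcing $b\cdot\nabla j$ can only be absorbed through $L^p_x$-control of $\nabla j$; but the sole source of such control, maximal regularity for the current equation, regenerates $\omega$ at the same $L^p$ level through $b\cdot\nabla\omega$, so the estimates for $\omega$ and for $\nabla j$ cannot be separated. What saves the argument, instead of merely relocating the problem, is that this feedback is linear in $\|\omega\|_{L^2_tL^p_x}$ with coefficient $\|b\|_{L^\infty}$ controlled by Lemma \ref{lem4}---a consequence of the divergence structure $b\cdot\nabla\omega=\nabla\cdot(b\omega)$ combined with the boundedness of $b$---and that the two $t^{1/2}$ Hölder losses produce only a harmless kernel $C_2\,t$ for Gronwall; the regularity $j_0\in H^1$ is exactly what is needed to keep the maximal-regularity data term integrable in time when $p>2$.
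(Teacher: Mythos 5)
Your proof is correct and follows essentially the route the paper intends: it invokes the positivity $\int_{\mathbb{R}^2}|\omega|^{p-2}\omega\,\mathcal{L}\omega\,\mathrm{d}x\geqslant 0$ to discard the nonlocal dissipation and then closes a coupled estimate between $\|\omega\|_{L^p}$ and the maximal-regularity bound for $\nabla j$, which is exactly the argument of \cite{FMMNZ2014} that the paper cites without reproducing. You have simply supplied the details (the divergence form $b\cdot\nabla\omega=\nabla\cdot(b\omega)$, the Riesz-transform reduction, and the Gronwall closure) that the paper leaves to that reference.
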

In virtue of
\begin{equation*}
\int_{\mathbb{R}^2}|\omega|^{p-2}\omega(x)\mathcal{L}\omega(x)\mathrm{d}x\geqslant 0
\end{equation*}
for all $2\leqslant p<\infty$ (see \cite{CoV2012}), the proof of Lemma \ref{lem5} can be obtained similar to \cite{FMMNZ2014}.

\eqref{eq:j-L2} can be encoded by
\begin{equation}
j_t-\Delta j=\sum_{i=1}^2\partial_i  (b_i \omega-u_i j)+T(\nabla u,\nabla b).
\end{equation}
Similar to Lemma \ref{lem4}, we have the following lemma.
\begin{lemma}\label{lem6}
Let $u_0, b_0 \in H^2(\mathbb R^2)$. Then
for any $T>0$ and $0<t<T$, we have
  \begin{equation}
  j\in L^{\infty}((0,T);L^{r}(\mathbb{R}^2)),\,\,\nabla j\in L^{q}((0,T);L^{p}(\mathbb{R}^2))
  \end{equation}
for any $p,q\in(2,\infty)$ and $r\in(2,\infty]$.
\end{lemma}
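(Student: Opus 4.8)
The plan is to treat the encoded equation
\[
 j_t-\Delta j=\sum_{i=1}^2\partial_i(b_i\omega-u_ij)+T(\nabla u,\nabla b)
\]
in the spirit of Lemma \ref{lem4}, splitting the right-hand side into the divergence part $\mathrm{div}(b\omega-uj)$ and the lower-order product $T(\nabla u,\nabla b)$, and applying the maximal $L^p(L^q)$ regularity of Lemma \ref{lem1} to each. First I would record the integrability of the coefficients supplied by the previous lemmas: $b\in L^\infty((0,T);L^\infty)$ and $\nabla b\in L^q((0,T);L^p)$ by Lemma \ref{lem4}, and $\omega\in L^\infty((0,T);L^p)$ by Lemma \ref{lem5}. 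Since $\nabla u=\nabla\nabla^{\perp}(-\Delta)^{-1}\omega$ is a Calder\'on--Zygmund transform of $\omega$, Lemma \ref{lem5} also gives $\nabla u\in L^\infty((0,T);L^p)$ for every $p\in(2,\infty)$, and together with $u\in L^\infty((0,T);L^2)$ (Lemma \ref{lem2}) and the Gagliardo--Nirenberg inequality this yields $u\in L^\infty((0,T);L^\infty)$. Hence $b\omega\in L^\infty((0,T);L^p)$ and, by H\"older across the splitting $\nabla u\in L^\infty L^a$, $\nabla b\in L^qL^b$, also $T(\nabla u,\nabla b)\in L^q((0,T);L^p)$ for the full range $p,q\in(2,\infty)$.

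To remove the circularity created by the factor $uj$, I would first bound $j$ in $L^\infty((0,T);L^r)$ by an $L^r$-energy estimate on the $j$-equation, exactly as Lemma \ref{lem5} does for $\omega$. Multiplying by $|j|^{r-2}j$ and integrating, the dissipation contributes $(r-1)\int|j|^{r-2}|\nabla j|^2\geqslant0$, the term involving $uj$ vanishes because $\mathrm{div}\,u=0$, the $b\omega$-part is absorbed after one integration by parts using $b\in L^\infty(L^\infty)$ and $\omega\in L^\infty(L^r)$ together with Young's inequality, and the remaining term $\int T|j|^{r-2}j\leqslant\|T\|_{L^r}\|j\|_{L^r}^{r-1}$ is closed by Gronwall since $T\in L^q((0,T);L^r)\subset L^1((0,T);L^r)$. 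This gives $j\in L^\infty((0,T);L^r)$ for $r\in(2,\infty)$, whence $uj\in L^\infty((0,T);L^r)$ and the whole divergence source $b\omega-uj\in L^\infty((0,T);L^r)\subset L^q((0,T);L^r)$.

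With all sources in hand the derivative estimate follows as in Lemma \ref{lem4}. Keeping the divergence structure and writing $\nabla\,\mathrm{div}=-R\otimes R\,\Delta$ with $R$ the Riesz transforms, the contribution of $\mathrm{div}(b\omega-uj)$ to $\nabla j$ equals $-R\otimes R\,A(b\omega-uj)$, so Lemma \ref{lem1} and the $L^p$-boundedness of $R$ give $\|\nabla j\|_{L^q(L^p)}\lesssim\|b\omega-uj\|_{L^q(L^p)}$; the genuine source $T$ is even easier, since Lemma \ref{lem1} applied directly produces $\Delta j\in L^q(L^p)$, hence $\nabla j\in L^q(L^p)$, for all $p,q\in(2,\infty)$ (the free term $e^{t\Delta}j_0$ being harmless since $j_0\in H^1$). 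For the $L^\infty$ endpoints I would use the kernel bounds $\|\nabla e^{\tau\Delta}g\|_{L^\infty}\lesssim\tau^{-1/2-1/p}\|g\|_{L^p}$ and $\|e^{\tau\Delta}g\|_{L^\infty}\lesssim\tau^{-1/p}\|g\|_{L^p}$ in the Duhamel formula for $j$: the first is time-integrable precisely because $p>2$, giving the $L^\infty((0,T);L^\infty)$ bound from the divergence source, and the second is integrable once $q'<p$, i.e. for $p,q$ large, giving the corresponding bound from $T$; this yields $j\in L^\infty((0,T);L^\infty)$ and closes the endpoint $r=\infty$. I expect the \emph{main difficulty} to be the presence of the non-divergence source $T$, absent in Lemma \ref{lem4}: it lies only in $L^q_t$ (not $L^\infty_t$), because $\nabla b$ does, so one must balance the H\"older exponents in time against the singularity of the heat kernel, and simultaneously ensure the $uj$-term is disposed of before, not during, the derivative estimate.
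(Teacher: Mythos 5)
Your proposal is correct and follows essentially the same route as the paper, which simply writes the $j$-equation in the form $j_t-\Delta j=\sum_i\partial_i(b_i\omega-u_ij)+T(\nabla u,\nabla b)$ and invokes the Duhamel/maximal-regularity argument of Lemma \ref{lem4} with the bounds from Lemmas \ref{lem2}--\ref{lem5}. Your preliminary $L^r$-energy estimate on $j$ is a clean (and welcome) way to supply the bound on the $u_ij$ term that the paper's one-line ``similar to Lemma \ref{lem4}'' leaves implicit; it does not change the method.
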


Exploiting  the structure of the \eqref{eq2}, we can get further estimates.
\begin{lemma}\label{lem7}
Let $p,q\in[2,\infty)$, $r\in[2,\infty]$. Assume $u_0,b_0\in H^4(\mathbb{R}^2)$, then for any $T>0$, we have
\begin{eqnarray}
& &\nabla j\in L^{\infty}((0,T);L^{p}(\mathbb{R}^2)),\,\,\Delta b+b \cdot \nabla u\in L^{\infty}((0,T);L^{r}(\mathbb{R}^2)),\label{ic1}\\
& &\nabla(\Delta b+b \cdot \nabla u)\in L^{q}((0,T);L^{p}(\mathbb{R}^2))\label{ic2}.
\end{eqnarray}
\end{lemma}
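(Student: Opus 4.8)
The whole lemma will be handled through the single good unknown
\[ G:=\Delta b+b\cdot\nabla u, \]
which by $(\ref{eq2})_2$ also equals $b_t+u\cdot\nabla b$. The plan is: (i) derive a heat equation for $G$ whose right–hand side is the divergence of a tensor carrying \emph{no} second derivative of $u$; (ii) bound that tensor in $L^q((0,T);L^p)$ (abbreviated $L^qL^p$) using only Lemmas \ref{lem5}--\ref{lem6}; (iii) read off \eqref{ic2} from the maximal regularity of Lemma \ref{lem1} and \eqref{ic1}$_2$ from the Duhamel formula with heat–kernel estimates; (iv) deduce \eqref{ic1}$_1$ from \eqref{ic1}$_2$. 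The reason for working with $G$ rather than with $\Delta b$ or $\nabla j$ directly is that the weak operator $\mathcal L$ gives no control on $\nabla\omega$ (equivalently on $\nabla^2u$), so every estimate must be arranged so that this quantity never appears.

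Differentiating $(\ref{eq2})_2$ in time and using $[\partial_t,\Delta]=0$ gives
\[ \partial_t G-\Delta G=\partial_t(b\cdot\nabla u)-\Delta(u\cdot\nabla b). \]
Since $\nabla\cdot u=\nabla\cdot b=0$ one has $b\cdot\nabla u=\mathrm{div}(b\otimes u)$ and $\Delta(u\cdot\nabla b)=\mathrm{div}\,\nabla(u\cdot\nabla b)$, so the right–hand side is $\mathrm{div}\,\mathbb H$ with
\[ \mathbb H=b_t\otimes u+b\otimes u_t-\nabla(u\cdot\nabla b). \]
The decisive structural fact is that $\mathbb H$ contains no second derivative of $u$: $\nabla(u\cdot\nabla b)$ produces only $\nabla u\cdot\nabla b$ and $u\cdot\nabla^2b$, while $u_t$, read from $(\ref{eq2})_1$ as $u_t=b\cdot\nabla b-u\cdot\nabla u-\nabla p-\mathcal Lu$, is of first order in $u$. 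I would then show $\mathbb H\in L^qL^p$ for all finite $p,q$: from Lemma \ref{lem6}, $\nabla j\in L^qL^p$ yields $\nabla^2b\in L^qL^p$ and hence $b_t=\Delta b+b\cdot\nabla u-u\cdot\nabla b\in L^qL^p$; moreover $u,b\in L^\infty L^\infty$, $\nabla u\in L^\infty L^p$ (Biot--Savart, Lemma \ref{lem5}), $\nabla b\in L^\infty L^p$ (Lemma \ref{lem6}), $\nabla p\in L^\infty L^p$ (pressure representation and Calderón--Zygmund), and $\mathcal Lu\in L^\infty L^p$ (as $\mathcal L$ is dominated by $\Lambda^{2\alpha}$, $0<2\alpha<1$, and $u\in L^\infty W^{1,p}$), so $u_t\in L^\infty L^p$ and $\nabla(u\cdot\nabla b)\in L^qL^p$.

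With $\partial_t G-\Delta G=\mathrm{div}\,\mathbb H$, $\mathbb H\in L^qL^p$, and $G(0)=\Delta b_0+b_0\cdot\nabla u_0\in H^2(\mathbb R^2)$, everything follows from $G(t)=e^{t\Delta}G(0)+\int_0^t e^{(t-s)\Delta}\mathrm{div}\,\mathbb H(s)\,\mathrm ds$. For \eqref{ic2} I would differentiate this and note that $\nabla\,\mathrm{div}\,e^{(t-s)\Delta}$ obeys the same $L^q(L^p)$ bound as the operator $A$ of Lemma \ref{lem1}; together with $\nabla G(0)\in H^1\subset L^p$ this yields $\nabla G\in L^qL^p$. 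For \eqref{ic1}$_2$, given a target $r$, I would use $\|e^{\tau\Delta}\mathrm{div}\,\mathbb H\|_{L^r}\lesssim \tau^{-\frac12-(\frac1p-\frac1r)}\|\mathbb H\|_{L^p}$ (valid for $p\le r$) and Hölder in time against $\mathbb H\in L^qL^p$; choosing $p\in[2,r]$ (with $p>2$ at the endpoint $r=\infty$) and $q$ large makes $(\tfrac12+\tfrac1p-\tfrac1r)q'<1$, so $G\in L^\infty((0,T);L^r)$ for every $r\in[2,\infty]$. Finally \eqref{ic1}$_1$ drops out: $\Delta b=G-b\cdot\nabla u\in L^\infty L^r$ for every finite $r$, hence $\nabla^2b\in L^\infty L^p$ by Calderón--Zygmund and $\nabla j\in L^\infty((0,T);L^p)$.

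The main obstacle is the second step: recognizing that the forcing of the $G$–equation is a divergence whose potential $\mathbb H$ is entirely free of $\nabla^2u$, so that it can be controlled using only the already-available $\nabla j\in L^qL^p$ of Lemma \ref{lem6} and never the (unavailable) gradient of $\omega$. Within this step the genuinely delicate inputs are the two terms of $u_t$ dictated by the structure of the system—the weak dissipation $\mathcal Lu$, admissible only because $\mathcal L$ is subordinate to every positive power of $\Lambda$ while $u\in L^\infty W^{1,p}$, and the pressure gradient $\nabla p$. The remaining care is the exponent bookkeeping in the third step, where $p>2$ and a large $q$ must be selected so that the heat–kernel singularity, after Hölder in time, stays integrable up to the endpoint $r=\infty$.
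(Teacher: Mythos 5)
Your proposal is correct and follows essentially the same route as the paper: the same good unknown $G=\Delta b+b\cdot\nabla u$, the same parabolic equation for it (your $\mathrm{div}\,\mathbb H$ with $u_t$, $b_t$ substituted from \eqref{eq2} reproduces exactly the paper's eight-term right-hand side \eqref{equb}), and the same combination of Duhamel/heat-kernel smoothing in time with the maximal regularity of Lemma \ref{lem1}, the whole point being, as you say, that the flux never contains $\nabla^2u$. The one justification to repair is ``$\mathcal L$ is dominated by $\Lambda^{2\alpha}$'': the kernel $|y|^{-2}m(|y|)^{-1}$ is \emph{not} bounded by $|y|^{-2-2\alpha}$ for large $|y|$, so to get $\left\|\mathcal Lu\right\|_{L^p}\lesssim \left\|u\right\|_{W^{1,p}}$ one should instead split the integral at $|y|=1$ and use $\int_0^1 m(r)^{-1}\,\mathrm dr<\infty$ and $\int_1^\infty (r\,m(r))^{-1}\,\mathrm dr<\infty$, exactly as in the paper's estimate \eqref{iequb}.
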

\begin{remark}\label{mr1}
In fact, the estimates of \eqref{ic1} need only $u_0\in H^{2+\epsilon_3}(\mathbb{R}^2),\,b_0\in H^{2+\epsilon_3}(\mathbb{R}^2)$ with $\epsilon_3>0$.
\end{remark}
\begin{remark}
Concerning the 2D resistive MHD, we still obtain the estimates  \eqref{ic1} and \eqref{ic2}.

For a 2D Euler equation with nonlocal forces
\begin{equation*}
\omega_t+u\cdot\nabla\omega=-\partial_yu_1=\mathcal{R}_{22}\omega,
\end{equation*}
where $\mathcal{R}_{ij}\omega$ denotes the Riesz transform $\partial_{ij}\Lambda^{-2}\omega$.
Elgindi and Masmoudi \cite{ElM2014} prove that it is mildly ill-posed in $L^\infty$.

Similar to \eqref{eq:omega1}, we have
\begin{eqnarray}\label{eq:omega1}
    \omega_t + u \cdot \nabla \omega &=& b_1( \Delta b_2+b \cdot \nabla u_2)-b_2( \Delta b_1+b \cdot \nabla u_1)-b_1 b \cdot \nabla u_2+b_2 b \cdot \nabla u_1\nonumber\\
    &=& f+b_1 \sum _{i=1}^2 b_i\mathcal{R}_{i2}\omega -b_2 \sum_{i=1}^{2}b_i\mathcal{R}_{i1}\omega,
  \end{eqnarray}
  where $f=b_1( \Delta b_2+b \cdot \nabla u_2)-b_2( \Delta b_1+b \cdot \nabla u_1)\in L^\infty(0,T;L^\infty(\mathbb{R}^2))$. So the results in \cite{ElM2014}
  suggest that it may be mildly ill-posed in $L^\infty$ in the case of 2D resistive MHD.
\end{remark}


\begin{proof}
Applying $b\cdot \nabla$ and $\Delta$ to $\eqref{eq2}_1$ and $\eqref{eq2}_2$ respectively, and multiplying $\eqref{eq2}_2$  by $\nabla u$, then adding the resulting
equations together we obtain
\begin{eqnarray}
& &(\Delta b+b \cdot \nabla u)_t-\Delta(\Delta b+b \cdot \nabla u)\nonumber\\
&=&-b\cdot\nabla(u\cdot\nabla u)+b\cdot\nabla(b\cdot\nabla b)-(u\cdot\nabla b)\cdot\nabla u+(b\cdot\nabla u)\cdot\nabla u\nonumber\\
& &+\Delta b\cdot\nabla u
-b\cdot\nabla(\nabla p) -\Delta(u \cdot \nabla b)-b\cdot\nabla\mathcal{L}u.\label{equb}
\end{eqnarray}
Firstly, we give the following estimates
\begin{equation*}
\Delta b+b \cdot \nabla u\in L^{\infty}((0,T);L^{2}(\mathbb{R}^2))\bigcap L^{2}((0,T);H^{1}(\mathbb{R}^2)).
\end{equation*}
 Multiplying \eqref{equb} by $\Delta b+b \cdot \nabla u$ and integrating on $\mathbb{R}^2$, we have
\begin{eqnarray*}
&&\frac{1}{2}\frac{\mathrm{d}}{\mathrm{d} t} \left\| \Delta b+b \cdot \nabla u \right\|_{L^2}^2+
\left\|\nabla(\Delta b+b \cdot \nabla u)\right\|_{L^2}^2 \\
&=&-\int_{\mathbb{R}^2} b\cdot\nabla(u\cdot\nabla u)(\Delta b+b \cdot \nabla u)\mathrm{d}x
+\int_{\mathbb{R}^2} b\cdot\nabla(b\cdot\nabla b)(\Delta b+b \cdot \nabla u)\mathrm{d}x\\
& &-\int_{\mathbb{R}^2} (u\cdot\nabla b)\cdot\nabla u(\Delta b+b \cdot \nabla u)\mathrm{d}x
+\int_{\mathbb{R}^2} (b\cdot\nabla u)\cdot\nabla u(\Delta b+b \cdot \nabla u)\mathrm{d}x\\
& &+\int_{\mathbb{R}^2} \Delta b\cdot\nabla u(\Delta b+b \cdot \nabla u)\mathrm{d}x
-\int_{\mathbb{R}^2} b\cdot\nabla(\nabla p)(\Delta b+b \cdot \nabla u)\mathrm{d}x\\
& &-\int_{\mathbb{R}^2} \Delta(u \cdot \nabla b)(\Delta b+b \cdot \nabla u)\mathrm{d}x
-\int_{\mathbb{R}^2} b\cdot\nabla\mathcal{L}u(\Delta b+b \cdot \nabla u)\mathrm{d}x\\
&=&RHS.
\end{eqnarray*}
Thanks to
\begin{eqnarray}\label{iequb}
&&\left\|b_i\mathcal{L}u\right\|_{L^2}\nonumber\\
&=&\left(\int_{\mathbb{R}^2}\left|b_i(x)P.V.\int_{\mathbb{R}^2} \frac{u(x)-u(x-y)}{|y|^2m(|y|)}
 \mathrm dy\right|^2\mathrm{d}x\right)^{\frac{1}{2}}\nonumber\\
&=&\left(\int_{\mathbb{R}^2}\left|b_i(x)P.V.\int_{|y|\leq1} \frac{u(x)-u(x-y)}{|y|^2m(|y|)}
 \mathrm dy+b_i(x)P.V.\int_{|y|\geq1} \frac{u(x)-u(x-y)}{|y|^2m(|y|)}
 \mathrm dy\right|^2\mathrm{d}x\right)^{\frac{1}{2}}\nonumber\\
&\leqslant&C\left(\int_{\mathbb{R}^2}\left|b_i(x)\int_{|y|\leq1}\int_0^1 \frac{|(\nabla u)(x-(1-t)y)|}{|y|m(|y|)}\mathrm dt
 \mathrm dy\right|^2\mathrm{d}x\right)^{\frac{1}{2}}\nonumber\\
 &&+C\left\|b\right\|_{L^2}\left\|u\right\|_{L^\infty}\int_{|y|\geq1} \frac{1}{|y|^2m(|y|)}
 \mathrm dy\nonumber\\
&\leqslant&C (\left\|b\right\|_{L^\infty}\left\|\nabla u\right\|_{L^2}+\left\|b\right\|_{L^2}\left\|u\right\|_{L^\infty})
\end{eqnarray}
and Lemmas 4-6,
the right hand side above can be simply estimated  as follows
 \begin{eqnarray*}
RHS &\leqslant&(\left\| b\right\|_{L^6}\left\| u\right\|_{L^6}\left\| \nabla u\right\|_{L^6}+\left\| b\right\|_{L^6}^2\left\| \nabla b\right\|_{L^6})\left\|\nabla(\Delta b+b \cdot \nabla u)\right\|_{L^2}\\
& &+(\left\| u\right\|_{L^6}\left\|\nabla b\right\|_{L^6}+\left\| b\right\|_{L^6}\left\|\nabla u\right\|_{L^6})\left\|\nabla u\right\|_{L^6}\left\|\Delta b+b \cdot \nabla u\right\|_{L^2}\\
& &+\left\|\Delta b\right\|_{L^4}\left\|\nabla u\right\|_{L^4}\left\|\Delta b+b \cdot \nabla u\right\|_{L^2}
+\left\|b\right\|_{L^4}\left\|\nabla p\right\|_{L^4}\left\|\nabla(\Delta b+b \cdot \nabla u)\right\|_{L^2}\\
& &+(\left\|\nabla u\right\|_{L^4}\left\|\nabla b\right\|_{L^4}+\left\|u\right\|_{L^4}\left\|\nabla^2 b\right\|_{L^4})\left\|\nabla(\Delta b+b \cdot \nabla u)\right\|_{L^2}\\
& &+C (\left\|b\right\|_{L^\infty}\left\|\nabla u\right\|_{L^2}+\left\|b\right\|_{L^2}\left\|u\right\|_{L^\infty})\left\|\nabla(\Delta b+b \cdot \nabla u)\right\|_{L^2}\\
&\leqslant& c(t)(\left\|\Delta b+b \cdot \nabla u\right\|_{L^2}+C(T))+\frac{1}{2}\left\|\nabla(\Delta b+b \cdot \nabla u)\right\|_{L^2}
\end{eqnarray*}
where $c(t)\in L^p(0,T)(2\leqslant p<\infty)$. Taking advantage of Gronwall inequality, we get the result.

Secondly, we prove the following estimates
\begin{equation}\label{eqi}
\Delta b+b \cdot \nabla u\in L^{\infty}((0,T);L^{p}(\mathbb{R}^2))\,\,(2<p<\infty).
\end{equation}
Thus, $\Delta b\in L^{\infty}((0,T);L^{p}(\mathbb{R}^2))\,\,(2<p<\infty).$

\eqref{equb} can be written as
\begin{eqnarray}\label{6.16mhd}
(\Delta b+b \cdot \nabla u)\left(x,t\right)\triangleq I_1+I_2+I_3+I_4+I_5+I_6+I_7+I_8+I_9,
\end{eqnarray}
where\\ $I_1=h(\cdot,t)*(\Delta b_0+b_0 \cdot \nabla u_0)$, $I_2=-\int^t_0 h(\cdot,t-s)*(b\cdot\nabla(u\cdot\nabla u))(\cdot,s)\mathrm{d}s$, \\
$I_3=\int^t_0 h(\cdot,t-s)*(b\cdot\nabla(b\cdot\nabla b))(\cdot,s)\mathrm{d}s$, $I_4=-\int^t_0 h(\cdot,t-s)*((u\cdot\nabla b)\cdot\nabla u ) (\cdot,s)\mathrm{d}s$,
$I_5=\int^t_0 h(\cdot,t-s)*((b\cdot\nabla u)\cdot\nabla u)(\cdot,s)\mathrm{d}s$, $I_6=\int^t_0 h(\cdot,t-s)*(\Delta b\cdot\nabla u)(\cdot,s)\mathrm{d}s$,
$I_7=-\int^t_0 h(\cdot,t-s)*(b\cdot\nabla(\nabla p))(\cdot,s)\mathrm{d}s$, $I_8=-\int^t_0 h(\cdot,t-s)*(\Delta(u \cdot \nabla b))(\cdot,s)\mathrm{d}s$,
$I_9=-\int^t_0 h(\cdot,t-s)*( b\cdot\nabla\mathcal{L}u)(\cdot,s)\mathrm{d}s$.

Then
\begin{eqnarray}\label{esI}
\left\| I_1\right\|_{L^{\infty}((0,T);L^{p}(\mathbb{R}^2))}&=&\left\|h(\cdot,t)*(\Delta b_0+b_0 \cdot \nabla u_0)\right\|_{L^{\infty}((0,T);L^{p}(\mathbb{R}^2))}\nonumber\\
&\leqslant& C\left\| h\right\|_{L^{\infty}((0,T);L^{1}(\mathbb{R}^2))}\left\|\Delta b_0+b_0 \cdot \nabla u_0\right\|_{L^{p}(\mathbb{R}^2)}\nonumber\\
&\leqslant& C\left\|\nabla^2 b_0\right\|_{L^{p}(\mathbb{R}^2)}
+C\left\| b_0\right\|_{L^{2p}(\mathbb{R}^2)}\left\|\nabla u_0\right\|_{L^{2p}(\mathbb{R}^2)}\nonumber\\
&\leqslant&C(\left\| u_0\right\|_{H^{\rho}(\mathbb{R}^2)}+\left\| b_0\right\|_{H^{\rho}(\mathbb{R}^2)}).
\end{eqnarray}
\begin{eqnarray*}
\left\| I_2\right\|_{L^{\infty}((0,T);L^{p}(\mathbb{R}^2))}&=&\left\|\int^t_0 h(\cdot,t-s)*(b\cdot\nabla(u\cdot\nabla u))(\cdot,s)\mathrm{d}s\right\|_{L^{\infty}((0,T);L^{p}(\mathbb{R}^2))}\\
&\leqslant& C\left\|\nabla h\right\|_{L^{1}((0,T);L^{1}(\mathbb{R}^2))}\left\|bu\nabla u\right\|_{L^{\infty}((0,T);L^{p}(\mathbb{R}^2))}\\
&\leqslant& C\left\|b\right\|_{L^{\infty}((0,T);L^{\infty}(\mathbb{R}^2))}\left\|u\right\|_{L^{\infty}((0,T);L^{\infty}(\mathbb{R}^2))}
\left\|\nabla u\right\|_{L^{\infty}((0,T);L^{p}(\mathbb{R}^2))}\\
&\leqslant& C(T).
\end{eqnarray*}
Arguing similarly to above, it can be derived $\left\| I_3\right\|_{L^{\infty}((0,T);L^{p}(\mathbb{R}^2))}\leqslant C(T)$,
$\left\| I_7\right\|_{L^{\infty}((0,T);L^{p}(\mathbb{R}^2))}\leqslant C(T)$.
Using an argument deriving the estimate \eqref{iequb}, we have
$\left\|b_i\mathcal{L}u\right\|_{L^{\infty}((0,T);L^{p}(\mathbb{R}^2))}\leqslant C(T)$, so $\left\| I_9\right\|_{L^{\infty}((0,T);L^{p}(\mathbb{R}^2))}\leqslant C(T)$.

For $I_4$, we obtain
\begin{eqnarray*}
\left\| I_4\right\|_{L^{\infty}((0,T);L^{p}(\mathbb{R}^2))}&=&\left\| \int^t_0 h(\cdot,t-s)*((u\cdot\nabla b)\cdot\nabla u ) (\cdot,s)\mathrm{d}s\right\|_{L^{\infty}((0,T);L^{p}(\mathbb{R}^2))}\\
&\leqslant& C\left\| h\right\|_{L^{1}((0,T);L^{1}(\mathbb{R}^2))}\left\|(u\cdot\nabla b)\cdot\nabla u\right\|_{L^{\infty}((0,T);L^{p}(\mathbb{R}^2))}\\
&\leqslant& C\left\|u\right\|_{L^{\infty}((0,T);L^{3p}(\mathbb{R}^2))}\left\|\nabla b\right\|_{L^{\infty}((0,T);L^{3p}(\mathbb{R}^2))}
\left\|\nabla u\right\|_{L^{\infty}((0,T);L^{3p}(\mathbb{R}^2))}\\
&\leqslant& C(T).
\end{eqnarray*}
Similarly, $\left\| I_5\right\|_{L^{\infty}((0,T);L^{p}(\mathbb{R}^2))}\leqslant C(T)$.

Choosing $2<q<\infty$, one has
\begin{eqnarray*}
\left\| I_6\right\|_{L^{\infty}((0,T);L^{p}(\mathbb{R}^2))}&=&\left\| \int^t_0 h(\cdot,t-s)*(\Delta b\cdot\nabla u)(\cdot,s)\mathrm{d}s\right\|_{L^{\infty}((0,T);L^{p}(\mathbb{R}^2))}\\
&\leqslant& C\left\| h\right\|_{L^{q'}((0,T);L^{1}(\mathbb{R}^2))}\left\|\Delta b\cdot\nabla u\right\|_{L^{q}((0,T);L^{p}(\mathbb{R}^2))}\\
&\leqslant& C\left\|\Delta b\right\|_{L^{2q}((0,T);L^{2p}(\mathbb{R}^2))}
\left\|\nabla u\right\|_{L^{2q}((0,T);L^{2p}(\mathbb{R}^2))}\\
&\leqslant& C(T),
\end{eqnarray*}
\begin{eqnarray*}
\left\| I_8\right\|_{L^{\infty}((0,T);L^{p}(\mathbb{R}^2))}&=&\left\| \int^t_0 h(\cdot,t-s)*(\Delta(u \cdot \nabla b))(\cdot,s)\mathrm{d}s\right\|_{L^{\infty}((0,T);L^{p}(\mathbb{R}^2))}\\
&\leqslant& C\left\|\nabla h\right\|_{L^{q'}((0,T);L^{1}(\mathbb{R}^2))}\left\|\nabla(u\nabla b)\right\|_{L^{q}((0,T);L^{p}(\mathbb{R}^2))}\\
&\leqslant& C\left\|\nabla u\right\|_{L^{2q}((0,T);L^{2p}(\mathbb{R}^2))}
\left\|\nabla b\right\|_{L^{2q}((0,T);L^{2p}(\mathbb{R}^2))}\\
&& +C\left\|u\right\|_{L^{2q}((0,T);L^{2p}(\mathbb{R}^2))}
\left\|\nabla^2 b\right\|_{L^{2q}((0,T);L^{2p}(\mathbb{R}^2))}\\
&\leqslant& C(T),
\end{eqnarray*}
where $q$ and $q'$ satisfy $\frac{1}{q}+\frac{1}{q'}=1$ and $q'<2$. So we arrive at \eqref{eqi}.

Thirdly, we prove
\begin{equation}\label{eqi1}
\Delta b+b \cdot \nabla u\in L^{\infty}((0,T);L^{\infty}(\mathbb{R}^2)).
\end{equation}
For $I_1$, similar to \eqref{esI}, we have
\begin{equation*}
\left\| I_1\right\|_{L^{\infty}((0,T);L^{\infty}(\mathbb{R}^2))}
\leqslant C(\left\| u_0\right\|_{H^{\rho}(\mathbb{R}^2)}+\left\| b_0\right\|_{H^{\rho}(\mathbb{R}^2)}).
\end{equation*}

Let $2<p_1<\infty$ and $\frac{1}{p_1}+\frac{1}{p'_1}=1$.
\begin{eqnarray*}
\left\| I_2\right\|_{L^{\infty}((0,T);L^{\infty}(\mathbb{R}^2))}&=&\left\|\int^t_0 h(\cdot,t-s)*(b\cdot\nabla(u\cdot\nabla u))(\cdot,s)\mathrm{d}s\right\|_{L^{\infty}((0,T);L^{\infty}(\mathbb{R}^2))}\\
&\leqslant& C\left\|\nabla h\right\|_{L^{1}((0,T);L^{p'_1}(\mathbb{R}^2))}\left\|bu\nabla u\right\|_{L^{\infty}((0,T);L^{p_1}(\mathbb{R}^2))}\\
&\leqslant& C\left\|b\right\|_{L^{\infty}((0,T);L^{\infty}(\mathbb{R}^2))}\left\|u\right\|_{L^{\infty}((0,T);L^{\infty}(\mathbb{R}^2))}
\left\|\nabla u\right\|_{L^{\infty}((0,T);L^{p_1}(\mathbb{R}^2))}\\
&\leqslant& C(T).
\end{eqnarray*}
Similarly, $\left\| I_3\right\|_{L^{\infty}((0,T);L^{\infty}(\mathbb{R}^2))}\leqslant C(T)$,
$\left\| I_7\right\|_{L^{\infty}((0,T);L^{\infty}(\mathbb{R}^2))}\leqslant C(T)$,
 $\left\| I_8\right\|_{L^{\infty}((0,T);L^{\infty}(\mathbb{R}^2))}\leqslant C(T)$,
$\left\| I_9\right\|_{L^{\infty}((0,T);L^{\infty}(\mathbb{R}^2))}\leqslant C(T)$.

For $I_4$, we have
\begin{eqnarray*}
\left\| I_4\right\|_{L^{\infty}((0,T);L^{\infty}(\mathbb{R}^2))}&=&\left\| \int^t_0 h(\cdot,t-s)*((u\cdot\nabla b)\cdot\nabla u ) (\cdot,s)\mathrm{d}s\right\|_{L^{\infty}((0,T);L^{\infty}(\mathbb{R}^2))}\\
&\leqslant& C\left\| h\right\|_{L^{1}((0,T);L^{p'_1}(\mathbb{R}^2))}\left\|(u\cdot\nabla b)\cdot\nabla u\right\|_{L^{\infty}((0,T);L^{p_1}(\mathbb{R}^2))}\\
&\leqslant& C\left\|u\right\|_{L^{\infty}((0,T);L^{3p_1}(\mathbb{R}^2))}\left\|\nabla b\right\|_{L^{\infty}((0,T);L^{3p_1}(\mathbb{R}^2))}
\left\|\nabla u\right\|_{L^{\infty}((0,T);L^{3p_1}(\mathbb{R}^2))}\\
&\leqslant& C(T),
\end{eqnarray*}
Similarly, $\left\| I_5\right\|_{L^{\infty}((0,T);L^{\infty}(\mathbb{R}^2))}\leqslant C(T)$,
$\left\| I_6\right\|_{L^{\infty}((0,T);L^{\infty}(\mathbb{R}^2))}\leqslant C(T)$. And \eqref{eqi1} is proved.

Finally, we prove \eqref{ic2}.

For $\nabla I_1$, we have
\begin{eqnarray*}
\left\|\nabla I_1\right\|_{L^{q}((0,T);L^{p}(\mathbb{R}^2))}&=&\left\|\nabla(h(\cdot,t)*(\Delta b_0+b_0 \cdot \nabla u_0))\right\|_{L^{q}((0,T);L^{p}(\mathbb{R}^2))}\\
&\leqslant& C\left\| h\right\|_{L^{q}((0,T);L^{1}(\mathbb{R}^2))}\left\|\nabla(\Delta b_0+b_0 \cdot \nabla u_0)\right\|_{L^{p}(\mathbb{R}^2)}\\
&\leqslant& C\left\|\nabla^3 b_0\right\|_{L^{p}(\mathbb{R}^2)}
+C\left\|\nabla b_0\right\|_{L^{2p}(\mathbb{R}^2)}\left\|\nabla u_0\right\|_{L^{2p}(\mathbb{R}^2)}\\
& &+C\left\| b_0\right\|_{L^{2p}(\mathbb{R}^2)}\left\|\nabla^2 u_0\right\|_{L^{2p}(\mathbb{R}^2)}\leqslant C(T).
\end{eqnarray*}

Thanks to Lemma \ref{lem1}, we obtain
\begin{eqnarray*}
\left\|\nabla I_2\right\|_{L^{q}((0,T);L^{p}(\mathbb{R}^2))}&=&\left\|\nabla \int^t_0 h(\cdot,t-s)*(b\cdot\nabla(u\cdot\nabla u))(\cdot,s)\mathrm{d}s\right\|_{L^{q}((0,T);L^{p}(\mathbb{R}^2))}\\
&\leqslant& C\left\|bu\nabla u\right\|_{L^{q}((0,T);L^{p}(\mathbb{R}^2))}\\
&\leqslant& C\left\|b\right\|_{L^{\infty}((0,T);L^{\infty}(\mathbb{R}^2))}\left\|u\right\|_{L^{\infty}((0,T);L^{\infty}(\mathbb{R}^2))}
\left\|\nabla u\right\|_{L^{q}((0,T);L^{p}(\mathbb{R}^2))}\\
&\leqslant& C(T).
\end{eqnarray*}
Similarly, $\left\|\nabla I_3\right\|_{L^{q}((0,T);L^{p}(\mathbb{R}^2))}\leqslant C(T)$,
$\left\|\nabla I_7\right\|_{L^{q}((0,T);L^{p}(\mathbb{R}^2))}\leqslant C(T)$,
$\left\|\nabla I_8\right\|_{L^{q}((0,T);L^{p}(\mathbb{R}^2))}\leqslant C(T)$,
$\left\|\nabla I_9\right\|_{L^{q}((0,T);L^{p}(\mathbb{R}^2))}\leqslant C(T)$.

For $\nabla I_4$, we get
\begin{eqnarray*}
\left\|\nabla I_4\right\|_{L^{q}((0,T);L^{p}(\mathbb{R}^2))}&=&\left\|\nabla \int^t_0 h(\cdot,t-s)*((u\cdot\nabla b)\cdot\nabla u ) (\cdot,s)\mathrm{d}s\right\|_{L^{q}((0,T);L^{p}(\mathbb{R}^2))}\\
&\leqslant& C\left\|\nabla h\right\|_{L^{1}((0,T);L^{1}(\mathbb{R}^2))}\left\|(u\cdot\nabla b)\cdot\nabla u\right\|_{L^{q}((0,T);L^{p}(\mathbb{R}^2))}\\
&\leqslant& C\left\|u\right\|_{L^{3q}((0,T);L^{3p}(\mathbb{R}^2))}\left\|\nabla b\right\|_{L^{3q}((0,T);L^{3p}(\mathbb{R}^2))}
\left\|\nabla u\right\|_{L^{3q}((0,T);L^{3p}(\mathbb{R}^2))}\\
&\leqslant& C(T).
\end{eqnarray*}
Similarly, $\left\|\nabla I_4\right\|_{L^{q}((0,T);L^{p}(\mathbb{R}^2))}\leqslant C(T)$,
$\left\|\nabla I_5\right\|_{L^{q}((0,T);L^{p}(\mathbb{R}^2))}\leqslant C(T)$,
$\left\|\nabla I_6\right\|_{L^{q}((0,T);L^{p}(\mathbb{R}^2))}\leqslant C(T)$.

Therefore, we obtain \eqref{ic2} and finish the proof of lemma \ref{lem7}.
\end{proof}

\section{The Proof of Theorem 1.1}\label{sec:the proof of theorem 1.1}

Due to
\begin{equation*}
\partial_1 j=\Delta b_2,\,\,\,\partial_2 j=-\Delta b_1,
\end{equation*}
(\ref{eq:omega-L2}) can be changed into
\begin{eqnarray}\label{eq:omega1}
    \omega_t + u \cdot \nabla \omega +\mathcal{L}\omega&=& b_1( \Delta b_2+b \cdot \nabla u_2)-b_2( \Delta b_1+b \cdot \nabla u_1)-b_1 b \cdot \nabla u_2+b_2 b \cdot \nabla u_1\nonumber\\
    &=& f-b_1 b \cdot \nabla u_2+b_2 b \cdot \nabla u_1,
  \end{eqnarray}
  where $f=b_1( \Delta b_2+b \cdot \nabla u_2)-b_2( \Delta b_1+b \cdot \nabla u_1)$.

Multiplying \eqref{eq:omega1} by $\omega(x,t)$, we obtain
\begin{equation*}
\frac{1}{2}(\partial_t+u\cdot\nabla)\left|\omega(x,t)\right|^2+\omega(x,t)\mathcal{L}\omega(x,t)=
(f-b_1 b \cdot \nabla u_2+b_2 b \cdot \nabla u_1)(x,t)\omega(x,t).
\end{equation*}
Using the pointwise identity
\begin{equation*}
\omega(x,t)\mathcal{L}\omega(x,t)=\frac{1}{2}\mathcal{L}(\left|\omega(x,t)\right|^2)+\frac{D(x,t)}{2}
\end{equation*}
(see \cite{CoV2012}), where
\begin{equation*}
D(x,t)=P.V.\int_{\mathbb{R}^2} \frac{(\omega(x,t)-\omega(x-y,t))^2}{|y|^2m(|y|)} \mathrm dy,
\end{equation*}
we get
\begin{equation}\label{eq:omega2}
\frac{1}{2}(\partial_t+u\cdot\nabla+\mathcal{L})\left|\omega(x,t)\right|^2+\frac{D(x,t)}{2}=
(f-b_1 b \cdot \nabla u_2+b_2 b \cdot \nabla u_1)(x,t)\omega(x,t).
\end{equation}
Choosing a non-negative radial smooth cut-off function $\chi_1(x)$ supported in $|x|\leqslant1$, identically equal to 1
for $|x|\leqslant\frac{1}{2}$ and $|\nabla\chi_1(x)|\leqslant C$. Let $\chi_2(x)=1-\chi_1(x)$.

By Biot-Savart law \cite{MaB2002},
\begin{equation}\label{bse}
u(x,t)=\frac{1}{2\pi}\int_{\mathbb{R}^2}(-\frac{y_2}{|y|^2},\frac{y_1}{|y|^2})\omega(x-y,t)\mathrm dy,
\end{equation}
so
\begin{eqnarray}\label{ieqomega}
&&\left|(b_1b\cdot\nabla u_2\omega)(x,t)\right|\nonumber\\
&=&\left|b_1(x,t)\omega(x,t)b(x,t)\cdot \frac{1}{2\pi}\int_{\mathbb{R}^2}\frac{y_1}{|y|^2}\nabla_y\omega(x-y,t)\mathrm dy\right|\nonumber\\
&\leqslant&\left|b_1(x,t)\omega(x,t)b(x,t)\cdot \frac{1}{2\pi}\int_{|y|\leq1}\frac{y_1}{|y|^2}\nabla_y(\omega(x,t)-\omega(x-y,t))\chi_1(y)\mathrm dy\right|\nonumber\\
&+&\left|b_1(x,t)\omega(x,t)b(x,t)\cdot \frac{1}{2\pi}\int_{|y|\geq\frac{1}{2}}\frac{y_1}{|y|^2}\nabla_y\omega(x-y,t)\chi_2(y)\mathrm dy\right|\nonumber\\
&\leqslant&c_1 \left\|b\right\|_{L^\infty}^2|\omega(x,t)|\int_{|y|\leq1}\frac{1}{|y|^2}|\omega(x,t)-\omega(x-y,t)|\mathrm dy\nonumber\\
&+&c_2 \left\|b\right\|_{L^\infty}^2|\omega(x,t)|\int_{|y|\geq\frac{1}{2}}\frac{1}{|y|^2}|\omega(x-y,t)|\mathrm dy\nonumber\\
&\leqslant&c_1 \left\|b\right\|_{L^\infty}^2|\omega(x,t)|\int_{|y|\leq1}\frac{|\omega(x,t)-\omega(x-y,t)|}{|y|\sqrt{m(|y|)}}
\frac{\sqrt{m(|y|)}}{|y|}\mathrm dy+c_3\left\|b\right\|_{L^\infty}^2\left\|\omega\right\|_{L^2}|\omega(x,t)|\nonumber\\
&\leqslant&c_4 \left\|b\right\|_{L^\infty}^2|\omega(x,t)|\sqrt{D(x,t)}(\int_0^1\frac{m(r)}{r} \mathrm dr)^{\frac{1}{2}}+c_3\left\|b\right\|_{L^\infty}^2\left\|\omega\right\|_{L^2}|\omega(x,t)|\nonumber\\
&\leqslant &\frac{D(x,t)}{8}+c_5\left\|b\right\|_{L^\infty}^4|\omega(x,t)|^2+c_3\left\|b\right\|_{L^\infty}^2\left\|\omega\right\|_{L^2}|\omega(x,t)|.
\end{eqnarray}
Similarly,
\begin{eqnarray*}
&&\left|(b_2b\cdot\nabla u_1\omega)(x,t)\right|\nonumber\\
&\leqslant&\frac{D(x,t)}{8}+c_5\left\|b\right\|_{L^\infty}^4|\omega(x,t)|^2+c_3\left\|b\right\|_{L^\infty}^2\left\|\omega\right\|_{L^2}|\omega(x,t)|.
\end{eqnarray*}
Thus, thanks to Lemma \ref{lem7}, \eqref{eq:omega2} and \eqref{ieqomega} give
\begin{eqnarray}
&&\frac{1}{2}(\partial_t+u\cdot\nabla+\mathcal{L})\left|\omega(x,t)\right|^2+\frac{D(x,t)}{4}\nonumber\\
&\leqslant&2c_5\left\|b\right\|_{L^\infty}^4|\omega(x,t)|^2+
(2c_3\left\|b\right\|_{L^\infty}^2\left\|\omega\right\|_{L^2}+\left\|f\right\|_{L^\infty})|\omega(x,t)|\nonumber\\
&\leqslant&2c_5\left\|b\right\|_{L^\infty}^4|\omega(x,t)|^2+
(2c_3\left\|b\right\|_{L^\infty}^2\left\|\omega\right\|_{L^2}+2\left\|b\right\|_{L^\infty}\left\|\Delta b+b \cdot \nabla u\right\|_{L^\infty})|\omega(x,t)|\nonumber\\
&\leqslant& C_1(T)(|\omega(x,t)|+|\omega(x,t)|^2).
\end{eqnarray}
Since
\begin{equation*}
D(x,t)\geqslant \frac{c_6}{m(1)}|\omega(x,t)|^2\log\frac{1}{\delta}-c_7|\omega(x,t)|\left\|\omega\right\|_{L^2}\frac{1}{\delta m(\delta)}
\end{equation*}
(see (5.18) in \cite{CoV2012}), where $\delta<1$. We pick $\delta=\delta(m,T)\in(0,1)$ to be such that
\begin{equation*}
\frac{c_6}{8m(1)}\log\frac{1}{\delta}>C_1(T).
\end{equation*}
Hence,
\begin{eqnarray}\label{ieqomega1}
&&\frac{1}{2}(\partial_t+u\cdot\nabla+\mathcal{L})\left|\omega(x,t)\right|^2+C_2(T)|\omega(x,t)|^2\nonumber\\
&\leqslant&(C_1(T)+C_3(T)\left\|\omega\right\|_{L^2})|\omega(x,t)|\nonumber\\
&\leqslant&C_4(T)|\omega(x,t)|.
\end{eqnarray}
Let $\varphi(r)$ be a non-decreasing positive convex smooth function which is identically 0 on
 $0\leqslant r\leqslant \mathrm{max}\{\left\|\omega_0\right\|_{L^\infty}^2,(\frac{C_4(T)}{C_1(T)})^2\}$.
 Multiplying \eqref{ieqomega1} by $\varphi^{'}(|\omega(x,t)|^2)$ gives
 \begin{equation}\label{ieqomega2}
 (\partial_t+u\cdot\nabla+\mathcal{L})\varphi(|\omega(x,t)|^2)\leqslant 0
 \end{equation}
 for all $x$ and all $t\in[0,T)$. Thanks to
 \begin{equation*}
\int_{\mathbb{R}^2}|\omega(x)|^{p-2}\omega(x)\mathcal{L}\omega(x)\mathrm{d}x\geqslant 0,
\end{equation*}
for all $2\leqslant p<\infty$ (see \cite{CoV2012}). Hence from \eqref{ieqomega2}, we obtain
\begin{equation*}
\left\|\varphi(|\omega(x,t)|^2)\right\|_{L^\infty}\leqslant\left\|\varphi(|\omega_0|^2)\right\|_{L^\infty}=0.
\end{equation*}
This gives that $\left\|\omega(\cdot,t)\right\|_{L^\infty}\leqslant \mathrm{max}\{\left\|\omega_0\right\|_{L^\infty},\frac{C_4(T)}{C_1(T)}\}$
for all $t\in [0,T)$.

By taking advantage of the BKM type criterion for global regularity (see \cite{CKS1997}), we finish the proof of Theorem \ref{thm1}.
\begin{remark}\label{rmk3}
Similar to Remark 5.4 in \cite{CoV2012}, if \eqref{c2} is replaced by  $\lim_{r\rightarrow 0^+}m(r)=0$,  we can still obtain the global regularity of \eqref{eq2}.
We only give $\omega\in L^\infty([0,T);L^\infty(\mathbb{R}^2))$. A sketch of the proof is as follows. We can assume that $\sup_{x\in\mathbb{R}^2}\omega(x,t)$ is obtained at $\bar{x}(t)$ for $t\in[0.T)$, if not, we only need to consider  \eqref{eq:omega1} multiplied by a smooth cut-off function. Then, at $\bar{x}$ the convection term in \eqref{eq:omega1}
vanishes and we have
\begin{equation*}
    \partial_t\omega(\bar{x},t)+ \mathcal{L}\omega(\bar{x},t)= ( f-b_1 b \cdot \nabla u_2+b_2 b \cdot \nabla u_1)(\bar{x},t).
\end{equation*}
Choosing a non-negative radial smooth cut-off function $\chi_3(x)$ supported in $|x|\leqslant \eta\,(\eta>0)$, identically equal to 1
for $|x|\leqslant\frac{1}{2}\eta$ and $|\nabla\chi_3(x)|\leqslant \frac{C}{\eta}$. Let $\chi_4(x)=1-\chi_3(x)$. Then by \eqref{bse}, we have
\begin{eqnarray*}
&&\left|(b_1b\cdot\nabla u_2)(\bar{x},t)\right|\nonumber\\
&=&\left|b_1(\bar{x},t)b(\bar{x},t)\cdot \frac{1}{2\pi}\int_{\mathbb{R}^2}\frac{y_1}{|y|^2}\nabla_y\omega(\bar{x}-y,t)\mathrm dy\right|\nonumber\\
&\leqslant&\left|b_1(\bar{x},t)b(\bar{x},t)\cdot \frac{1}{2\pi}\int_{|y|\leqslant\eta}\frac{y_1}{|y|^2}\nabla_y(\omega(\bar{x},t)-\omega(\bar{x}-y,t))\chi_3(y)\mathrm dy\right|\nonumber\\
&+&\left|b_1(\bar{x},t)b(\bar{x},t)\cdot \frac{1}{2\pi}\int_{|y|\geqslant\frac{1}{2}\eta}\frac{y_1}{|y|^2}\nabla_y\omega(\bar{x}-y,t)\chi_4(y)\mathrm dy\right|\nonumber\\
&\leqslant&C \left\|b\right\|_{L^\infty}^2\int_{|y|\leqslant\eta}\frac{1}{|y|^2}|\omega(\bar{x},t)-\omega(\bar{x}-y,t)|\mathrm dy\nonumber\\
&+&C\left\|b\right\|_{L^\infty}^2\int_{|y|\geqslant\frac{1}{2}\eta}\frac{1}{|y|^2}|\omega(\bar{x}-y,t)|\mathrm dy.
\end{eqnarray*}
Similarly, we have
\begin{eqnarray*}
&&\left|(b_1b\cdot\nabla u_2)(\bar{x},t)\right|\nonumber\\
&\leqslant&C \left\|b\right\|_{L^\infty}^2\int_{|y|\leqslant\eta}\frac{1}{|y|^2}|\omega(\bar{x},t)-\omega(\bar{x}-y,t)|\mathrm dy\nonumber\\
&+&C \left\|b\right\|_{L^\infty}^2\int_{|y|\geqslant\frac{1}{2}\eta}\frac{1}{|y|^2}|\omega(\bar{x}-y,t)|\mathrm dy.
\end{eqnarray*}
Hence,
\begin{eqnarray*}
\partial_t\omega(\bar{x},t)&\leqslant&\int_{\mathbb{R}^2} \frac{\omega(\bar{x}-y,t)-\omega(\bar{x},t)}{|y|^2m(|y|)} \mathrm dy+C \left\|b\right\|_{L^\infty}^2\int_{|y|\leqslant\eta}\frac{1}{|y|^2}|\omega(\bar{x},t)-\omega(\bar{x}-y,t)|\mathrm dy\nonumber\\
&+&C \left\|b\right\|_{L^\infty}^2\int_{|y|\geqslant\frac{1}{2}\eta}\frac{1}{|y|^2}|\omega(\bar{x}-y,t)|\mathrm dy+f(\bar{x},t)\\
&\leqslant&\int_{|y|\leqslant\eta}\frac{\omega(\bar{x}-y,t)-\omega(\bar{x},t)}{|y|^2}
\left(\frac{1}{m(|y|)}-C\left\|b\right\|_{L^\infty}^2\right)\mathrm dy\\
&&-\omega(\bar{x},t)\int_{|y|\geqslant\eta} \frac{1}{|y|^2m(|y|)}\mathrm dy
+C \frac{1}{\eta}(\left\|b\right\|_{L^\infty}^2+\frac{1}{m(\eta)})\left\|\omega\right\|_{L^2}+\left\|f\right\|_{L^\infty}.
\end{eqnarray*}
Thanks to Lemma \ref{lem4}, we can choose $\eta$ (dependent on $T$) small so that $\frac{1}{m(|y|)}-C\left\|b\right\|_{L^\infty}^2>0$. Due to Lemma \ref{lem7}, we obtain
\begin{equation*}
\partial_t\omega(\bar{x},t)\leqslant C_5(T)-C_6(T)\omega(\bar{x},t).
\end{equation*}
Therefore $\omega(\bar{x},t)\leqslant C(T)$ for $t\in[0,T)$. A similar argument can be applied to the minimum  and we obtain $\left\|\omega(\cdot,t)\right\|_{L^\infty}\leqslant C(T) $ for all $t\in [0,T).$
\end{remark}


\textbf{Acknowledgement} The research of B Yuan
was partially supported by the National Natural Science Foundation
of China (No. 11471103).


\end{document}